\newtheorem{theorem}{Theorem}[section]
\newtheorem{lemma}[theorem]{Lemma}
\newtheorem{corollary}[theorem]{Corollary}
\newtheorem{proposition}[theorem]{Proposition}
\theoremstyle{definition}
\newtheorem{definition}[theorem]{Definition}
\theoremstyle{remark}
\newtheorem{remark}[theorem]{Remark}
\numberwithin{equation}{section}
\newcommand{\imod}[1]{\,\,\text{mod}\,\,#1}
\def\calB{{\mathcal B}} 
\def\calC{{\mathcal C}} 
\def\calD{{\mathcal D}}
\def\calE{{\mathcal E}}
\def\calM{{\mathcal M}}
\def\calN{{\mathcal N}}
\def\calO{{\mathcal O}}
\def\calQ{{\mathcal Q}}
\def\calR{{\mathcal R}} 
\def\calS{{\mathcal S}}
\def\F{{\mathbb F}}\def\N{{\mathbb N}}
\def\R{{\mathbb R}}
\def\Z{{\mathbb Z}}\def\Q{{\mathbb Q}}
\def\alp{{\alpha}}
\def\del{{\delta}}
\def\tet{{\theta}}  
\def\lam{{\lambda}} \def\Lam{{\Lambda}}
\def\d{{\partial}}
\def\eps{\varepsilon}
\def\le{\leqslant} \def\ge{\geqslant}
\def\d{{\,{\rm d}}}
\def\imod#1{\allowbreak\mkern5mu({\operator@font mod}\,\,#1)}
\DeclareMathOperator{\lcm}{lcm}
\DeclareMathOperator{\sgn}{sgn}
\begin{document}

\author{Peter Cho-Ho Lam}

\address{Department of Mathematics\\
Simon Fraser University\\
Burnaby, BC V5A 1S6\\
CANADA}
\email{chohol@sfu.ca}
\urladdr{http://www.sfu.ca/~chohol/}

\author{Damaris Schindler}

\address{Mathematisch Instituut\\ Universiteit Utrecht\\ Budapestlaan~6\\ NL-3584 CD~Utrecht\\ The Netherlands}
\email{d.schindler@uu.nl}
\urladdr{http://www.uu.nl/staff/DSchindler}

\author{Stanley Yao Xiao}

\address{Mathematical Institute\\
University of Oxford\\
Andrew Wiles Building\\
Radcliffe Observatory Quarter\\
Woodstock Road\\
Oxford\\
OX2 6GG\\UK}
\email{stanley.xiao@maths.ox.ac.uk}
\urladdr{https://www.maths.ox.ac.uk/people/stanley.xiao}


\title[On prime values of binary quadratic forms with a thin variable]{On prime values of binary quadratic forms with a thin variable}

\date{\today}

\begin{abstract} In this paper we generalize the result of Fouvry and Iwaniec dealing with prime values of the quadratic form $x^2 + y^2$ with one input restricted to a thin subset of the integers. 
We prove the same result with an arbitrary primitive positive definite binary quadratic form.
In particular, for any positive definite binary quadratic form $F$ and binary linear form $G$, there exist infinitely many $\ell, m\in\Z$ such that both $F(\ell, m)$ and $G(\ell, m)$ are primes as long as there are no local obstructions.
\end{abstract}

\maketitle


\section{Introduction}

One of the most difficult problems in number theory concerns finding primes among interesting subsets of the natural numbers. A particular example of such a problem is finding primes among values of a given polynomial. Several famous conjectures belong to this line of investigation, including the Bateman-Horn conjecture.\\

In the case of polynomials of a single variable it is unknown whether a given polynomial represents infinitely many primes, except for linear polynomials by the seminal work of Dirichlet. For polynomials in two variables we have some non-linear examples, including quadratic norm forms, all suitable quadratic polynomials by work of Iwaniec \cite{Iwa}, binary cubic forms by work of Heath-Brown \cite{HB1} and Heath-Brown and Moroz \cite{HBM}, and the polynomial $x^2+y^4$ due to Friedlander and Iwaniec \cite{Frie-I}.
One obvious approach is to deduce the analogous results for single variable polynomials from their two-variable counterparts by restricting one variable. Currently we do not know how to do this, but  in some cases we can restrict one of the variables to a sparse subset of the integers. This gives rise to an interesting family of problems.\\

One particular example that has been considered is the case of the quadratic form $x^2+y^2$, where $y$ is restricted to a sparse subset of the integers, including the case of $y$ being prime. This was worked out in great detail by Fouvry and Iwaniec \cite{FI}. Lam \cite{Lam1}\cite{Lam2} and Pandey \cite{Pan} studied similar problems for principal forms of certain negative discriminants. In all these cases the number of admissible $y$ up to size $X$ cannot be less than $O_\del(X^{1-\del})$ for any positive $\del$. Friedlander and Iwaniec \cite{Frie-I} were able to break this barrier in the special case of restricting $y$ to the set of squares. Heath-Brown and Li \cite{HBL} later refined their methods to restrict $y$ to the set of prime squares.\\

In this paper we further generalize the work of Fouvry and Iwaniec \cite{FI} by considering arbitrary primitive positive definite binary quadratic forms. It is worth mentioning that Friedlander and Iwaniec \cite{Frie-I3} provided a simplified proof of \cite{FI} if $y$ is restricted to the set of primes.This was followed by Lam \cite{Lam2} and Pandey \cite{Pan} in their own works but we decided to follow the original argument. \par
Let $F(x,y)\in \Z[x,y]$ be a positive definite and primitive quadratic form (i.e. the greatest common divisor of its coefficients is equal to $1$). For $d\in \mathbb{N}$, we set
$$\rho(d)=\sharp\{\nu \imod d: F(1,\nu)\equiv 0 \imod d\}.$$
We shall prove the following theorem. 

\begin{theorem}\label{thm1}
Let $F(x,y)=\alpha x^2+\beta xy+\gamma y^2\in \Z[x,y]$ be a primitive positive definite quadratic form and $X$ be a positive real number. Let $\lam (\ell)$ be a sequence of complex numbers supported on the natural numbers which satisfy the bound $|\lam(\ell)|\leq C  \log^A\ell $ for all $\ell \in \mathbb{N}$ and some fixed $A, C>0$. Suppose $q\in\N$ and $q\le (\log X)^N$ for some $N>0$. Then for any $B>0$ and $a\in\Z$ with $\gcd(a, q)=1$ we have
$$\sum_{\substack{F(\ell,m)\leq X\\F(\ell, m)\equiv a\imod q}}\lam(\ell)\Lam(F(\ell,m)) = H_{F, q}\sum_{\substack{F(\ell,m)\leq X\\\gcd(\ell, \gamma m)=1\\\gcd(F(\ell, m), P_F)=1\\F(\ell, m)\equiv a\imod q}} \lam(\ell)+ O_{A,B,C,F, N} (X(\log X)^{-B}),$$
where $\Lam$ is the von Mangoldt function and
$$H_{F, q}=\prod_{p\nmid qP_F}\left(1-\frac{\rho(p)}{p}\right)\left(1-\frac{1}{p}\right)^{-1}\prod_{p| qP_F}\left(1-\frac{1}{p}\right)^{-1}.$$
and $P_F = \prod_{p \leq C_F} p$ with $C_F$ depending only on $F$.
\end{theorem}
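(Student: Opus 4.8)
The plan is to follow the Fouvry--Iwaniec strategy adapted to an arbitrary primitive positive definite form $F$. The first step is a reduction to a lattice point count. Writing $F(\ell,m)=n$ and using the correspondence between representations of $n$ by $F$ and roots of $F(1,\nu)\equiv 0\imod n$ (more precisely, between such representations and ideals in the order of discriminant $\disc F$), one rewrites the left-hand sum as a sum over pairs $(n,\nu)$ with $n\le X$, $\nu$ a root of $F(1,\nu)\equiv 0 \imod n$, weighted by $\Lam(n)$ and by the pushforward of $\lam$ along the map $(\ell,m)\mapsto \ell$. The congruence $n\equiv a\imod q$ and the conditions $\gcd(\ell,\gamma m)=1$, $\gcd(n,P_F)=1$ can be tracked by fixing residue classes; the factor $P_F=\prod_{p\le C_F}p$ is precisely there to remove the finitely many bad primes (those dividing $\disc F$, those where the class-group/ideal dictionary degenerates, and small primes where $\rho(p)$ behaves irregularly).

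The second and central step is a bilinear (Type I / Type II) decomposition of $\Lam$, via Vaughan's identity or Heath-Brown's identity, reducing everything to sums of the shape $\sum_{d\sim D}\sum_{k}\mu\text{-type coefficient}\cdot(\text{number of }(\ell,m)\text{ with }F(\ell,m)=dk,\ dk\le X,\ \text{congruences})$ and their bilinear analogues. The Type I sums are handled by counting lattice points of $F(\ell,m)\equiv 0\imod d$ in expanding regions: one shows that for $d$ up to $X^{1-\eps}$ the count is $\frac{\rho(d)}{d}\cdot(\text{area})+O(\text{error})$ with the error saving a power of $X$ after summation, using the geometry of numbers for the lattice $\{(\ell,m): F(\ell,m)\equiv 0\imod d\}$ and the estimate $\rho(d)\ll d^{\eps}$. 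For the dependence on $\lam(\ell)$ one splits $\ell$ into residue classes mod $d$ and mod $q$; since $|\lam(\ell)|\ll \log^A\ell$, no cancellation in $\lam$ is needed, only equidistribution of the companion variable $m$ in arithmetic progressions, which is where the level of distribution beyond $X^{1/2}$ must be extracted.

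The third step is the Type II estimate, which I expect to be the main obstacle. Here one must gain cancellation in sums $\sum_{\ell}\sum_{m}\alpha_\ell\beta_m[\![F(\ell,m)\le X,\ F(\ell,m)\equiv a\imod q]\!]$ with $\beta_m$ a smooth-ish coefficient and $\ell$ ranging over a thin set carrying the weight $\lam$. Following Fouvry--Iwaniec, one passes to the ring of integers (or the relevant order) of $\Q(\sqrt{\disc F})$: a value $F(\ell,m)$ corresponds to the norm of an ideal, and the bilinear form becomes, after Poisson summation in one variable and opening the norm, an exponential sum over ideals that one estimates by a dispersion/large-sieve argument. The new feature compared to $x^2+y^2$ is that the class number may exceed $1$, so one works with all reduced forms in the genus (or class) of $F$ simultaneously and must control the cross terms; the characters of the class group enter, and one needs a Siegel--Walfisz-type input (which explains the ineffective dependence on $F$ hidden in $C_F$, $P_F$, and the implied constant, and the restriction $q\le(\log X)^N$). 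The smoothing of $\lam$ via the Fouvry--Iwaniec device — replacing $\lam(\ell)$ on short intervals by its average and bounding the variance — is what converts the thin input into something the dispersion method can digest; verifying that the resulting error terms still save $(\log X)^{-B}$ uniformly for $q\le(\log X)^N$ is the delicate bookkeeping that occupies the bulk of the argument.
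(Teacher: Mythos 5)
Your proposal follows the same global architecture as the paper — a Fouvry--Iwaniec-style sieve identity reducing to a Type I (level of distribution) estimate and a bilinear Type II estimate, with the class-group complication handled via composition/ideal-class structure and a Siegel--Walfisz input for the final diagonal — so the overall strategy is correctly identified. Your remarks about passing to ideals in the order and simultaneously working with all classes, and about the role of $P_F$ in removing bad primes, match what the paper does via explicit Dirichlet composition (Section~\ref{sec2}, Propositions~\ref{bqf1} and~\ref{bqf2}).

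There are, however, two concrete gaps. First, in the Type~I step you say the lattice count $\frac{\rho(d)}{d}(\text{area})+O(\text{error})$ is obtained ``using the geometry of numbers for the lattice $\{(\ell,m):F(\ell,m)\equiv 0\imod d\}$'' and that the error saves a power of $X$ ``after summation.'' That is not how the level of distribution up to $D\approx X^{1-\eps}$ is actually achieved, and as described it cannot work: individual geometry-of-numbers error terms are far too large to sum trivially over $d\leq D$. The mechanism in the paper is Poisson summation in $m$, which converts the remainder $R_d(X;\chi)$ into exponential sums $\sum_\nu\big|\sum_n \alpha_n e(\nu n/d)\big|$ where $\nu$ runs over roots of $F(1,\nu)\equiv 0\imod d$, followed by a large-sieve-type inequality for these sums averaged over $d\sim D$. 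For $F=x^2+y^2$ this rests on the Fouvry--Iwaniec optimal spacing result for the fractions $\nu/d$ with $\nu^2+1\equiv 0\imod d$; that spacing argument does not extend to a general primitive positive definite $F$, and the paper's fix is to invoke Proposition~3 of Balog--Blomer--Dartyge--Tenenbaum (\cite{BBDT}, reproduced here as Proposition~\ref{largesieve}). Your proposal never identifies this obstacle or its resolution, which is precisely one of the two new technical points the authors single out in their introduction.

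Second, the ``smoothing of $\lambda$ via the Fouvry--Iwaniec device — replacing $\lambda(\ell)$ on short intervals by its average and bounding the variance'' is not a step in the actual Type~II argument. What happens instead is that Cauchy--Schwarz moves $\lambda$ to the outer (primitive-pair) sum, after which the inner object is a quadratic form $\calD(\alpha)$ (see~\eqref{calDa} and~\eqref{Da}) handled by the dispersion method: the diagonal $c=0$ is bounded trivially, the off-diagonal is treated via M\"obius inversion, Poisson summation, and Proposition~15 of \cite{FI}, and the residual short sums in small arithmetic progressions and sectors are disposed of by the Siegel--Walfisz-type input (Helfgott's lemma on $\mu(Q(x,y))$). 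No averaging of $\lambda$ over short intervals occurs; the $\lambda$ is absorbed essentially losslessly by Cauchy--Schwarz. Finally, while your ``ideals in the order'' framing is morally sound, the paper needs \emph{explicit} bilinear change-of-variable formulas (Proposition~\ref{bqf2}) coming from Dirichlet composition in order to make the dispersion variables $(P,Q)$ and $(w_0,z_0)$ concrete; a purely ideal-theoretic treatment would have to produce equivalent explicit coordinates to run the argument.
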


Note that $H_{F, q}$ is positive if $\rho(2)\neq 2$. For example the primitive positive definite binary quadratic form $2x^2+xy+y^2$ of discriminant $-7$ cannot represent infinitely many prime values with $x$ a prime. On the other hand it exhibits infinitely many prime values with $y$ prime. The flexibility provided by $\lambda(\ell)$ and $F(\ell, m)\equiv a\imod q$ have applications in proving Vinogradov's three primes theorem with special types of primes; see \cite{Gri} for details.\\

The purpose of introducing $P_F$ in our expression is to remove some small prime factors that avoid the use of Dirichlet composition law (see Section 2). In practice this dependence on $P_F$ can be removed via M\"obius inversion. For example, a particularly attractive consequence of Theorem \ref{thm1} is the following:
\begin{corollary}\label{cor1}
Let $F$ be a positive definite binary quadratic form and $G$ be a binary linear form. Assume that for every prime $p$ there are $x,y\in \Z$ such that $p\nmid F(x, y)G(x,y)$. Then there exist infinitely many $\ell, m\in\mathbb{Z}$ such that both $F(\ell, m)$ and $G(\ell, m)$ are primes.
\end{corollary}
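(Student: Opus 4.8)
The plan is to deduce Corollary~\ref{cor1} from Theorem~\ref{thm1} by a positivity argument, after normalising $G$. First note that $G$ is necessarily primitive: if a prime $p$ divided every coefficient of $G=ax+by$, then $p\mid G(x,y)$, hence $p\mid F(x,y)G(x,y)$, for all $x,y$, contradicting the local hypothesis. A primitive linear form extends to a $\Z$-basis of $\Z^2$, so choosing $c,d$ with $ad-bc=1$ and substituting $(x,y)=(du-bv,\,-cu+av)$, a unimodular change of variables, turns $G$ into $u$ and carries $F$ to another positive definite primitive binary quadratic form. Since this substitution is a bijection of $\Z^2$ it alters neither the local hypothesis nor the conclusion, so after relabelling the new variables $u,v$ as $\ell,m$ we may assume $G(\ell,m)=\ell$ and $F(\ell,m)=\alpha\ell^2+\beta\ell m+\gamma m^2$.

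Now apply Theorem~\ref{thm1} with $\lam(\ell)=\Lam(\ell)$ (admissible with $A=C=1$, since $|\Lam(\ell)|\le\log\ell$), $q=1$, $a=1$, and $B=2$. This gives
\[
S(X):=\sum_{F(\ell,m)\le X}\Lam(\ell)\Lam(F(\ell,m))=H_{F,1}\sum_{\substack{F(\ell,m)\le X\\ \gcd(\ell,\gamma m)=1\\ \gcd(F(\ell,m),P_F)=1}}\Lam(\ell)+O_{F}\big(X(\log X)^{-2}\big),
\]
and the task is to show the main term is $\gg_F X$. For the constant $H_{F,1}$: the local hypothesis at $p=2$ provides $(x,y)$ with $x$ odd and $2\nmid F(x,y)$, which a direct computation of $F(1,\nu)\bmod 2$ shows is impossible when $\rho(2)=2$; hence $\rho(2)\ne2$ and $H_{F,1}>0$ by the Remark after Theorem~\ref{thm1}. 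For the sum: using the local hypothesis at each prime $p\mid P_F$ together with the Chinese Remainder Theorem, fix residues $\ell_0,m_0$ modulo $P_F$ for which $\gcd(\ell_0,P_F)=1$ and $\gcd(F(\ell,m),P_F)=1$ whenever $(\ell,m)\equiv(\ell_0,m_0)\imod{P_F}$, and restrict the sum to primes $\ell\equiv\ell_0\imod{P_F}$ lying in a range in which $\ell\asymp_F\sqrt X$. For such $\ell$, completing the square in $m$ shows that $\{m:F(\ell,m)\le X\}$ is an interval of length $\asymp_F\sqrt X$; of the $m$ in it lying in the class $m_0\imod{P_F}$ only $O_F(1)$ are removed by the condition $\ell\nmid m$, while $\ell>\gamma$ forces $\gcd(\ell,\gamma m)=1$, so the inner count over $m$ is $\gg_F\sqrt X$. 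The prime number theorem in the progression $\ell_0\imod{P_F}$ then yields $\gg_F\sqrt X$ admissible primes $\ell$, giving $\gg_F X$ for the whole restricted sum, hence $S(X)\gg_F X$ for large $X$.

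It remains to pass from positivity to infinitude. The pairs with $\ell$ a proper prime power contribute $O_F(X^{3/4}(\log X)^2)$ to $S(X)$, and those with $F(\ell,m)$ a proper prime power contribute $O_F(X^{1/2+\eps})$ (using $\#\{(\ell,m):F(\ell,m)=n\}\ll_F n^{\eps}$), so the part of $S(X)$ in which $\ell$ and $F(\ell,m)$ are both prime is still $\gg_F X$. Each surviving term is at most $(\log X)^2$, so the number of pairs $(\ell,m)$ with $F(\ell,m)\le X$ and both $\ell$, $F(\ell,m)$ prime is $\gg_F X(\log X)^{-2}$, which tends to infinity with $X$; undoing the change of variables then produces infinitely many $\ell,m\in\Z$ with $F(\ell,m)$ and $G(\ell,m)$ prime.

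The real work is entirely inside Theorem~\ref{thm1}; the only substantive point in this deduction is ensuring that the main term does not degenerate. That splits into the positivity $H_{F,1}>0$, which is exactly where the local condition at $2$ is needed, and the lower bound $\gg_F X$ for the restricted sum over the ellipse $F(\ell,m)\le X$, which is where the local conditions at the small primes dividing $P_F$ come in.
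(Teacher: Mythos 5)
Your proof is correct, and its overall architecture matches the paper's: reduce $G$ to a coordinate by a unimodular substitution and then feed the result into the main theorem. The route from there, however, is genuinely different. The paper deduces Corollary~\ref{cor1} from Corollary~\ref{cor2}: it first strips the constraints $\gcd(F(\ell,m),P_F)=1$ and $\gcd(\ell,\gamma m)=1$ by M\"obius inversion over divisors of $P_F$ (controlling the error with Lemma~\ref{lemR2}), arriving at the asymptotic $\frac{H_q\,\rho(q;a,b)}{q\phi(q)}\frac{\pi X}{\sqrt{|\Delta|}}+O(X(\log X)^{-A})$, and then simply cites that formula with $q=1$; the positivity of the constant, equivalently $\rho(2)\neq 2$, is left implicit. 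You instead apply Theorem~\ref{thm1} directly and establish positivity of the main term by a lower-bound argument: you check $\rho(2)\neq 2$ explicitly from the local condition at $2$, then restrict $(\ell,m)$ to a single residue class mod $P_F$ compatible with all the coprimality constraints (which exists by the local hypothesis plus CRT, since $P_F$ is squarefree), and obtain $\gg_F X$ from the prime number theorem in progressions together with the length $\asymp\sqrt X$ of the $m$-interval. Your version is more self-contained --- it spells out both the $\rho(2)$ check and the proper-prime-power cleanup needed to convert $\sum\Lambda(\ell)\Lambda(F(\ell,m))\gg X$ into infinitely many genuine prime pairs, neither of which the paper makes explicit --- at the cost of not recovering the sharp asymptotic constant that Corollary~\ref{cor2} supplies. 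Both deductions are sound.
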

It is also possible to impose the conditions $F(\ell, m)\equiv a\imod q$ and $G(\ell, m)\equiv b\imod q$:

\begin{corollary}\label{cor2}
Let $F(x,y)\in \Z[x,y]$ be a primitive positive definite quadratic form of discriminant $-\Delta$. Suppose $q\in\N$ and $q\le (\log X)^N$ for some $N>0$. Then for any $A>0$ and $a, b\in\Z$ with $\gcd(ab, q)=1$ we have
\begin{gather*}
\sum_{\substack{F(\ell,m)\leq X\\F(\ell, m)\equiv a\imod q\\\ell\equiv b\imod q}}\Lam(\ell)\Lam(F(\ell,m)) =  \frac{H_q\rho(q; a, b)}{q\phi(q)}\frac{\pi X}{\sqrt{|\Delta|}}+ O_{A, N} (X(\log X)^{-A})
\end{gather*}
where
\[
H_q=\prod_{p\nmid q}\left(1-\frac{\rho(p)}{p}\right)\left(1-\frac{1}{p}\right)^{-1}
\]
and
\[
\rho(d; a, b)=\sharp\{\nu \imod d: F(b,\nu)\equiv a \imod d\}.
\]
\end{corollary}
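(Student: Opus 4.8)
The proof of Corollary \ref{cor2} deduces the claimed asymptotic from Theorem \ref{thm1} by choosing the weight $\lam(\ell) = \Lam(\ell)\mathbf{1}_{\ell\equiv b\imod q}$ and then evaluating the resulting main term. First I would verify that this choice is admissible: the von Mangoldt function satisfies $\Lam(\ell)\le \log\ell$, so the hypothesis $|\lam(\ell)|\le C\log^A\ell$ holds with $A=1$. Plugging into Theorem \ref{thm1} gives
\[
\sum_{\substack{F(\ell,m)\leq X\\F(\ell, m)\equiv a\imod q\\\ell\equiv b\imod q}}\Lam(\ell)\Lam(F(\ell,m)) = H_{F, q}\sum_{\substack{F(\ell,m)\leq X\\\gcd(\ell, \gamma m)=1\\\gcd(F(\ell, m), P_F)=1\\F(\ell, m)\equiv a\imod q\\\ell\equiv b\imod q}} \Lam(\ell)+ O_{A, N} (X(\log X)^{-A}),
\]
so everything reduces to an unweighted-by-$\Lam(F)$ lattice point count, twisted by $\Lam(\ell)$ and by congruence conditions. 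Since $\gcd(ab,q)=1$, the condition $\ell\equiv b\imod q$ already forces $\gcd(\ell,q)=1$, and $\Lam(\ell)$ is supported on prime powers $\ell=p^k$; the contribution of proper prime powers is $O(X^{1/2+\eps})$ and may be discarded, as can the contribution of primes $\ell=p$ dividing $\gamma$ or with $p\le C_F$, which is $O(X/\log X)$ since such $\ell$ are $O(1)$ in number and for each the count of $m$ with $F(\ell,m)\le X$ is $O(X^{1/2})$ times $\log \ell = O(\log X)$.

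The heart of the matter is then to show that
\[
H_{F,q}\sum_{\substack{F(\ell,m)\leq X\\ \ell\text{ prime},\ \gcd(\ell m,\gamma)=1\\\gcd(F(\ell, m), P_F)=1\\F(\ell, m)\equiv a\imod q\\ \ell\equiv b\imod q}} \log \ell \;=\; \frac{H_q\,\rho(q;a,b)}{q\phi(q)}\cdot\frac{\pi X}{\sqrt{|\Delta|}} + O_{A,N}\!\left(X(\log X)^{-A}\right).
\]
I would handle the constraint $\gcd(F(\ell,m),P_F)=1$ by M\"obius inversion over divisors $e\mid P_F$, writing $\mathbf{1}_{\gcd(F(\ell,m),P_F)=1}=\sum_{e\mid P_F}\mu(e)\mathbf{1}_{e\mid F(\ell,m)}$; since $P_F=O_F(1)$ this is a finite sum, and combining the local density $\rho(e)/e$ at primes $p\le C_F$ coming from $e\mid F(\ell,m)$ with the Euler factors $(1-1/p)^{-1}$ built into $H_{F,q}$ converts the truncated Euler product $H_{F,q}$ into the full product $H_q$ over all $p\nmid q$. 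The congruence conditions $F(\ell,m)\equiv a\imod q$ and $\ell\equiv b\imod q$ cut out $\rho(q;a,b)$ residue classes for $(\ell,m)\imod q$ by the very definition of $\rho(d;a,b)$ (with $\ell\equiv b$ prescribed and $m$ ranging over solutions of $F(b,m)\equiv a$), while $\ell$ prime with $\ell\equiv b\imod q$ is counted by the prime number theorem in arithmetic progressions: $\sum_{\ell\le Y,\ \ell\equiv b\imod q}\log\ell = Y/\phi(q)+O(Y\exp(-c\sqrt{\log Y}))$, which is where the $\phi(q)$ in the denominator and the strong error term originate (using $q\le(\log X)^N$ so that the Siegel–Walfisz range applies). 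Finally, for a fixed $\ell$ the number of $m$ with $F(\ell,m)\le X$ in a fixed residue class mod $q$ is $\frac{1}{q}\cdot 2\sqrt{X}\big/\big(\ell\sqrt{|\Delta|}\,/\,\text{(something)}\big)$ up to $O(1)$; more cleanly, summing over all $\ell,m$ with $F(\ell,m)\le X$ the main term is $\tfrac{\pi X}{\sqrt{|\Delta|}}$ times the density of the congruence and primality conditions, by counting lattice points in the ellipse $F(\ell,m)\le X$ of area $\tfrac{2\pi X}{\sqrt{|\Delta|}}$ restricted to the half-plane $\ell>0$, with equidistribution in residues mod $q$ and in the primes via partial summation against the prime-counting function. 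Assembling these pieces yields the stated constant $\frac{H_q\rho(q;a,b)}{q\phi(q)}\cdot\frac{\pi X}{\sqrt{|\Delta|}}$.

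The main obstacle is bookkeeping rather than depth: one must check that the Euler factor manipulation genuinely produces exactly $H_q$ (no leftover factors at primes dividing $q$ or $P_F$), and that the interchange of the M\"obius sum over $e\mid P_F$, the sum over primes $\ell$, and the lattice-point count is justified with all error terms staying below $X(\log X)^{-A}$ uniformly for $q\le(\log X)^N$. The uniformity in $q$ forces the use of Siegel–Walfisz (hence the ineffectivity of the implied constant in $A$), and care is needed because $\rho(q;a,b)$ can be as large as a small power of $q$, so one must confirm $\rho(q;a,b)\ll_\eps q^\eps$ (this follows from $\rho(q;a,b)$ being multiplicative-type in $q$ with $\rho(p^k;a,b)=O(1)$ for $p\nmid\Delta$, and controlled by bad primes dividing $\Delta$) to absorb it harmlessly into the main term while keeping the relative error small. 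Once these checks are in place the corollary follows immediately.
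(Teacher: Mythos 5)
Your approach tracks the paper's in outline: apply Theorem~\ref{thm1} with $\lam(\ell)=\Lam(\ell)$ supported on $\ell\equiv b\imod q$, remove the constraint $\gcd(F(\ell,m),P_F)=1$ by M\"obius inversion over the bounded set of $e\mid P_F$, and evaluate the remaining weighted lattice sum via the half-ellipse count together with Siegel--Walfisz for $\ell$ prime in a progression. (The paper instead starts from the character-twisted identity~\eqref{apply}, controls the M\"obius remainders by Lemma~\ref{lemR2}, and applies orthogonality at the end; since $e\mid P_F$ is bounded, your direct count is an acceptable substitute.) You have correctly isolated the Euler-factor bookkeeping as the delicate point, but that is precisely where your argument, as written, breaks down.

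You assert that ``the Euler factor manipulation genuinely produces exactly $H_q$,'' and this is not correct. The M\"obius sum contributes $\sum_{e\mid P_F,\,\gcd(e,q)=1}\mu(e)\rho(e)/e=\prod_{p\mid P_F,\,p\nmid q}(1-\rho(p)/p)$, and since $H_{F,q}$ carries the factor $\prod_{p\mid q}(1-1/p)^{-1}$ one finds
\[
H_{F,q}\prod_{\substack{p\mid P_F\\ p\nmid q}}\left(1-\frac{\rho(p)}{p}\right)=H_q\prod_{p\mid q}\left(1-\frac{1}{p}\right)^{-1}=H_q\cdot\frac{q}{\phi(q)},
\]
so a factor $q/\phi(q)$ survives. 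On the other hand, the weighted lattice count gives, for each large prime $\ell\equiv b\imod q$, exactly $\rho(q;a,b)$ admissible residue classes for $m$ modulo $q$, so that by Siegel--Walfisz and partial summation against the interval length $\gamma^{-1}\sqrt{4\gamma X-\Delta\ell^2}$ one obtains
\[
\sum_{\substack{F(\ell,m)\le X\\ \ell\equiv b\imod q\\ F(\ell,m)\equiv a\imod q}}\Lam(\ell)=\frac{\rho(q;a,b)}{q\phi(q)}\cdot\frac{\pi X}{\sqrt{|\Delta|}}+O_{A,N}\left(X(\log X)^{-A}\right).
\]
Multiplying these two quantities yields the constant $H_q\rho(q;a,b)/\phi(q)^2$, not $H_q\rho(q;a,b)/(q\phi(q))$; the two differ by the $q/\phi(q)$ you discarded. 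Indeed, summing $H_q\rho(q;a,b)/\phi(q)^2$ over all $a,b\in(\Z/q\Z)^\times$ recovers the $q=1$ singular series $H_1\prod_{p\mid q}(1-\rho(p)/p)(1-1/p)^{-1}\cdot$(nothing extra), whereas $H_q\rho(q;a,b)/(q\phi(q))$ does not, which strongly suggests the constant as printed in the Corollary contains a misprint. In short: your ``assembling these pieces'' step silently drops the leftover $q/\phi(q)$, and you should carry out the Euler-factor computation explicitly and reconcile it with the stated formula rather than asserting the match.
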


One way to phrase Corollary \ref{cor1} is that given the complete norm form $F(x,y)$ and restricting the first variable $x$ to primes, the form still represents infinitely many primes. A natural extension of this question is to ask given an arbitrary primitive complete norm form $\calN$ in $n$ variables and restricting a subset of the variables to a special set $\calS$, does $\calN$ still represent infinitely many primes? In this formulation Heath-Brown \cite{HB1} and Heath-Brown and Moroz \cite{HBM} can be viewed as restricting one variable in a complete cubic norm form to be equal to zero. More recently, Maynard \cite{May} showed that complete norm forms still represent infinitely many primes even with as many as a quarter of the variables are set to zero.\\

More generally, one expects a polynomial $G$ with exactly $r$ factors over $\Q$ should take values which have exactly $r$ prime factors if there are no local obstructions. Indeed this is included in Schinzel's hypothesis. Our corollary \ref{cor1} is a step towards confirming this conjecture for binary cubic forms, following the theorem of Heath-Brown and Moroz \cite{HBM}, by confirming this for the case when $F$ has one linear factor and negative discriminant.\\

\begin{corollary}\label{cor3}
Let $H(x,y)\in \Z[x,y]$ be a binary cubic form with negative discriminant, that is reducible over $\Q$. Assume that for every prime $p$ there are $x,y\in \Z$ such that $p\nmid H(x,y)$. Then there exist infinitely many integers $x,y$ such that $H(x,y)$ has exactly two prime factors.
\end{corollary}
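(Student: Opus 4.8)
The plan is to deduce Corollary~\ref{cor3} from Corollary~\ref{cor1}, once $H$ has been put into the normal form dictated by the sign of its discriminant. First I would pin down the factorization type of $H$. Since $H$ is reducible over $\Q$, either it splits into three rational linear factors, or it is a rational linear form times a quadratic form that is irreducible over $\Q$. In the first case $H$ has three real roots and $\disc H\ge 0$; in the second case, if the quadratic factor were reducible over $\R$ we would again obtain three real roots (distinct, since $\disc H\neq 0$ makes $H$ separable) and $\disc H\ge 0$. As $\disc H<0$, we must be in the second case with the quadratic factor definite over $\R$. Using that the hypothesis ``for every prime $p$ there are $x,y$ with $p\nmid H(x,y)$'' forces the content of $H$ to be $1$, and invoking multiplicativity of content together with a sign adjustment, I can write
$$H=\eps\,L\,Q,\qquad \eps\in\{\pm1\},$$
where $L\in\Z[x,y]$ is a primitive binary linear form and $Q\in\Z[x,y]$ is a primitive positive definite binary quadratic form.

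Next I would apply Corollary~\ref{cor1} with $F=Q$ and $G=L$. Its hypothesis is that for every prime $p$ there exist $x,y\in\Z$ with $p\nmid Q(x,y)L(x,y)$; since $Q(x,y)L(x,y)=\eps\,H(x,y)$, this is exactly the assumption of Corollary~\ref{cor3}. Corollary~\ref{cor1} therefore produces infinitely many $(\ell,m)\in\Z^2$ for which $Q(\ell,m)$ and $L(\ell,m)$ are both (positive) primes, and for each such pair $|H(\ell,m)|=Q(\ell,m)L(\ell,m)$ is a product of two primes. To guarantee exactly two prime factors counted with multiplicity, discard the pairs with $L(\ell,m)=Q(\ell,m)$; since $Q(\ell,m)-L(\ell,m)\to+\infty$ as $|(\ell,m)|\to\infty$, there are only finitely many of these, so infinitely many admissible pairs remain.

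The main point here is not analytic — all the difficulty sits in Theorem~\ref{thm1} and Corollary~\ref{cor1} — but rather the bookkeeping in the first step: one must check that the quadratic factor $Q$ can be taken primitive and (after pulling out the sign $\eps$) positive definite, and that $H$ itself is primitive so that no spurious fixed prime divisor is introduced. Once this normalization is in place, the local solubility condition required by Corollary~\ref{cor1} for the pair $(Q,L)$ is literally the hypothesis imposed on $H$, and the deduction is immediate.
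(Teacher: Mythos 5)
Your proposal is correct and matches the deduction the paper leaves implicit (the paper states Corollary \ref{cor3} immediately after Corollary \ref{cor1} without spelling out the reduction): the negative discriminant forces a factorization into a rational linear form times a quadratic form irreducible over $\R$, primitivity and sign normalization follow from the no-fixed-prime-divisor hypothesis and Gauss's lemma, and the local condition for the pair $(Q,L)$ is identical to that for $H$. One tiny remark: the final discarding step is needed only if one insists on two \emph{distinct} prime factors; if ``exactly two prime factors'' is read as $\Omega(H(\ell,m))=2$, then even the finitely many pairs with $L(\ell,m)=Q(\ell,m)=p$ give $|H(\ell,m)|=p^2$, which already qualifies, so the conclusion holds either way.
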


In particular, there are infinitely many integers with exactly two prime factors that are sums of two cubes, see also work of Pandey \cite{Pan}.\\

To deduce Theorem \ref{thm1} from the work of Fouvry and Iwaniec \cite{FI} we must overcome two difficulties. The first is that the proof of a key lemma which is critical in Fouvry and Iwaniec \cite{FI} fails for a general binary quadratic form. In particular they obtained an optimal spacing result of roots modulo $d$ of the congruence $\nu^2+1\equiv 0 $ modulo $d$. Fortunately an analogous result was developed by Balog, Blomer, Dartyge and Tenenbaum \cite{BBDT}. We will then mimic the argument from \cite{Frie-I2} to finish up the proof in Section \ref{sec5} as the original argument in \cite{FI} is not sufficient for our case. The second issue is that in general the arithmetic over a ring of integers $\calO_K$ with $K$ a quadratic number field is not analogous to the arithmetic over $\Z[i]$ when $\calO_K$ has a non-trivial class group. To overcome this issue we require several applications of the Dirichlet composition law. We will develop the necessary tools in Section \ref{sec2} and then employ it in Section \ref{bi sums}.\\

{\bf Acknowledgements:} We thank Trevor Wooley for pointing out to us the statement in Corollary \ref{cor3}. The second author is supported by a NWO grant 016.Veni.173.016.\\

{\bf Notation:} We write $\tau(n)$ for the divisor function of a natural number $n$. ${\sum}^\flat$ denotes a sum over positive squarefree integers.

\section{Dirichlet composition}\label{sec2}
Let $F(x, y)=\alpha x^2+\beta xy+\gamma y^2$ be a primitive positive definite  quadratic form of discriminant $-\Delta$. For a sequence of complex numbers $\lam(\ell)$, $\ell\in\N$ and $N\in \N$ we define a sequence
\begin{equation}\label{defa}
a_N= \sum_{\substack{F(\ell,m)=N\\\gcd(\ell, \gamma m)=1}}\lam (\ell).
\end{equation}
As $F(x,y)$ is assumed to be positive definite, this is a finite sum.

In \cite{FI}, a key component of the bilinear sum estimates is the identity
\begin{equation} \label{bilinear 1} a_{mn} = \frac{1}{4} \sum_{|w|^2 = m} \sum_{|z|^2 = n} \lambda(\ell),\end{equation}
where $\ell = \Re(\overline{w} z)$, when $F(x,y)=x^2+y^2$ and $\gcd(m, n)=1$. This is based on the classical identity
\[(a^2 + b^2)(c^2 + d^2) = (ac - bd)^2 + (bc + ad)^2\]
and the fact that there is one binary quadratic form of discriminant $-4$ up to (proper) equivalence. We now extend this identity to the case when the class number is not equal to one. To generalize (\ref{bilinear 1}), we will use the \emph{Dirichlet composition law}.


\begin{definition}[Dirichlet composition] \label{Dirichlet comp} Let $f(x, y)=ax^2+bxy+cy^2$ and $F(x, y)=\alpha x^2+\beta xy+\gamma y^2$ be primitive positive definite forms of discriminant $-\Delta<0$ which satisfy $\gcd(a, \alpha, (b+\beta)/2)=1$. Then the \textit{Dirichlet composition} of $f(x, y)$ and $F(x, y)$ is the form
\[
h(x, y)=a\alpha x^2+Bxy+\frac{B^2+\Delta}{4a\alpha}y^2
\]
where $B$ is any integer such that
\[
\begin{split}
B&\equiv b\imod{2a}\\
B&\equiv \beta\imod{2\alpha}\\
B^2+\Delta&\equiv 0\imod{4a\alpha}.
\end{split}
\] 
\end{definition} 

See \cite{Cox} for a good reference in Dirichlet composition. Note that $(b+\beta)/2\in\Z$ since $b^2\equiv\beta^2\equiv-\Delta\imod4$. This composition makes the equivalence class of binary quadratic of discriminant $\Delta$ into an abelian group. The term \textit{composition} is justified by the following identity:
\begin{equation}
\label{comp law}
(au^2+buv+cv^2)(\alpha X^2+\beta XY+\gamma Y^2)=a\alpha W^2+BWZ+\frac{B^2+\Delta}{4a\alpha}Z^2
\end{equation}
where
\begin{equation}\label{mw}
W=\bigg(u-\frac{B-b}{2a}v\bigg)X-\bigg(\frac{B-\beta}{2\alpha}u+\frac{(b+\beta)B+\Delta-b\beta}{4a\alpha}v\bigg)Y
\end{equation}
and
\begin{equation}\label{mz}
Z=\alpha vX+\bigg(au+\frac{b+\beta}{2}v\bigg)Y.
\end{equation}
It is convenient to have explicit coefficients in the composition for our purposes. \\

To establish an analogue of \eqref{bilinear 1}, we need to study the solutions of
\[
mn=F(X, Y)
\]
when $\gcd(m, n)=1$. One can show that $m$ can be represented by a binary quadratic form of the same discriminant, say $f(x, y)$; and by composing with $F(x, y)$ we obtain a form that represents $n$. But to work out the composition explicitly, the condition $\gcd(a, \alpha, (b+\beta)/2)=1$ is needed. This motivates us to construct a set of binary quadratic forms, $\mathcal{S}_F(t)$, in which this condition is always satisfied.\\


For any $F(x, y)=\alpha x^2+\beta xy+\gamma y^2$ and any $t\in\Z$, define $\mathcal{S}_F(t)$ to be a set of binary quadratic form of discriminant $-\Delta$ such that
\begin{enumerate}
\item every primitive binary quadratic form of discriminant $-\Delta$ is properly equivalent to exactly one element in $\mathcal{S}_F(t)$;
\item the principal form is contained in $\mathcal{S}_F(t)$; and
\item the set $\{f(1, 0): f\in\mathcal{S}_F(t)\}$ consists of distinct primes that do not divide $t$.
\end{enumerate}
If $\mathcal{S}_F(t)$ is the set of primitive reduced forms of discriminant $-\Delta$, then (1) and (2) are satisfied. Since each of them represent infinitely primes, if necessary, we can transform the form so that the coefficient of $x^2$ is one of these primes and thus it is clear that (3) can be satisfied. Now we put $\calS_F=\calS_F(\alpha)$ and define
\begin{equation}
\label{PF}
Q_F=2\alpha\gamma\Delta\prod_{f\in\mathcal{S}_F}f(1, 0).
\end{equation}
We assume $P_F$ is large enough so that $Q_F|P_F$. We also pick an integer $B$ with the following properties:
\begin{enumerate}
\item $B\equiv b\imod {2a}$ for all $ax^2+bxy+cy^2\in \mathcal{S}_F$;
\item $B\equiv \beta\imod {2\alpha}$; and
\item $B^2+\Delta\equiv0\imod {4a\alpha}$ for all $ax^2+bxy+cy^2\in \mathcal{S}_F$.
\end{enumerate}
So $B$ only depends on $F$ and the choice of $\mathcal{S}_F$.\\


\begin{proposition}
\label{bqf1}
Let $\Delta$ be a positive integer and $F(x, y)=\alpha x^2+\beta xy+\gamma y^2$ be a primitive binary quadratic form of discriminant $-\Delta$. Let $m, n$ be positive integers such that $\gcd(mn, P_F)=1$. If $mn=F(X, Y)$ for some integers $X, Y$ with $\gcd(X, Y)=1$, then there exists a unique binary quadratic form $f(x,y)=ax^2+bxy+cy^2\in \mathcal{S}_F$ and integers $u, v, w, z$ such that $\gcd(u,v)=\gcd(w,z)=1$ and
\begin{equation*}
\begin{split}
au^2+buv+cv^2&=m,\\
a\alpha w^2+Bwz+\frac{B^2+\Delta}{4a\alpha}z^2&=n,\\
\bigg(au+\frac{b+\beta}{2}v\bigg)w+\bigg(\frac{B-\beta}{2\alpha}u+\frac{(b+\beta)B+\Delta-b\beta}{4a\alpha}v\bigg)z&=X,\\
-\alpha vw+\bigg(u-\frac{B-b}{2a}v\bigg)z&=Y;
\end{split}
\end{equation*}
and if $\Delta>4$ then there is exactly one more tuple, namely $(-u, -v, -w, -z)$, that satisfies the properties. If $\Delta=3, 4$ we have $6$ or $4$ solutions, respectively.
\end{proposition}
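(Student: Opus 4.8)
The plan is to realise the tuple $(f,u,v,w,z)$ as the datum of a factorisation $\mathfrak c=\mathfrak a\mathfrak b$ of the integral ideal $\mathfrak c$ of norm $mn$ attached to the primitive representation $mn=F(X,Y)$ (with $N\mathfrak a=m$, $N\mathfrak b=n$), made explicit through the composition law \eqref{comp law}, and to organise the argument around the map which sends a tuple to $(X,Y)$. Throughout, one uses the coprimality relations $\gcd(mn,2\alpha\gamma\Delta)=1$ and $\gcd\bigl(mn,\prod_{g\in\mathcal S_F}g(1,0)\bigr)=1$, both consequences of $Q_F\mid P_F$ and $\gcd(mn,P_F)=1$.

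\emph{Tuple $\to (X,Y)$.} For $f=ax^2+bxy+cy^2\in\mathcal S_F$ the Dirichlet composition $h:=a\alpha x^2+Bxy+\tfrac{B^2+\Delta}{4a\alpha}y^2=f*F$ is defined, since $\gcd(a,\alpha,(b+\beta)/2)=1$ by construction of $\mathcal S_F$ and $B$ is the fixed auxiliary integer. Feeding $u,v$ into \eqref{mw}, \eqref{mz} produces linear forms $W,Z$ in the second pair of variables with coefficient matrix $M$, and a direct computation gives $\det M=au^2+buv+cv^2=f(u,v)$. Hence if $f(u,v)=m$, $h(w,z)=n$, $\gcd(u,v)=\gcd(w,z)=1$, and we \emph{define} $(X,Y)^{T}:=\operatorname{adj}(M)\,(w,z)^{T}$ — which is exactly the pair of expressions for $X$ and $Y$ in the statement — then $M(X,Y)^{T}=m(w,z)^{T}$, so \eqref{comp law} yields $mF(X,Y)=h(mw,mz)=m^{2}h(w,z)$, i.e.\ $F(X,Y)=mn$. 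Moreover any common divisor of $X$ and $Y$ divides $m\gcd(w,z)=m$, and the prime divisors of $m$ are then ruled out using the representation together with $\gcd\bigl(mn,\prod g(1,0)\bigr)=1$, so $\gcd(X,Y)=1$.

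\emph{$(X,Y)\to$ tuple, uniqueness and multiplicity.} Since $\gcd(X,Y)=1$, complete $(X,Y)^{T}$ to a matrix of $\SL_2(\Z)$ and carry $F$ to a properly equivalent form $G=mnx^{2}+exy+gy^{2}$ with $e^{2}-4mng=-\Delta$; one checks that $e\bmod 2mn$ depends only on $F$ and $(X,Y)$. As $4m\mid e^{2}+\Delta$ and $\gcd(m,\Delta)=1$, the forms $f_{1}:=mx^{2}-exy+\tfrac{e^{2}+\Delta}{4m}y^{2}$ and $f_{2}:=nx^{2}+exy+\tfrac{e^{2}+\Delta}{4n}y^{2}$ are primitive of discriminant $-\Delta$ (a common prime factor of the first two coefficients of either would divide $\Delta$), and since $\gcd(m,n,e)=1$ the composition law identifies $\overline{f_{1}}*f_{2}$ with $G$, so $F\sim\overline{f_{1}}*f_{2}$. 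For $h:=f*F$ to represent $n$ — equivalently, to lie in the class of $f_{2}$ — one needs $[f]=[f_{1}]$; thus $f$ must be the unique element of $\mathcal S_F$ properly equivalent to $f_{1}$ (using that every primitive form of discriminant $-\Delta$ is properly equivalent to exactly one member of $\mathcal S_F$), and writing $f_{1}=f\circ\sigma$ with $\sigma\in\SL_2(\Z)$ yields a primitive $(u,v)$ with $f(u,v)=m$. Unwinding the identification $G\sim\overline{f_{1}}*f_{2}$ one checks that for this $(u,v)$ the given $(X,Y)$ satisfies $W(X,Y)\equiv Z(X,Y)\equiv0\pmod m$, so $(w,z):=\tfrac1m\bigl(W(X,Y),Z(X,Y)\bigr)$ is integral and the tuple $(f,u,v,w,z)$ maps back to $(X,Y)$; conversely every admissible tuple arises this way. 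The residual freedom is to replace $(u,v)$ by its image under a proper automorph of $f$, which forces the corresponding change of $(w,z)$; as a positive definite primitive form of discriminant $-\Delta$ has $2$ proper automorphs for $\Delta>4$ and $4$, resp.\ $6$, for $\Delta=4$, resp.\ $3$, we obtain exactly the stated count, the extra solution for $\Delta>4$ being $(-u,-v,-w,-z)$.

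I expect the main obstacle to lie not in the conceptual content — this is classical Gauss composition and unique factorisation of ideals prime to the conductor — but in the explicit bookkeeping: checking $\det M=f(u,v)$, verifying that the correspondence, a priori only well defined up to proper equivalence, produces \emph{exactly} the bilinear forms \eqref{mw}, \eqref{mz} and their adjugate, and transporting the divisibilities $m\mid W(X,Y),Z(X,Y)$ and the three coprimality statements through the two unimodular changes of variables. It is precisely in these checks that the hypotheses packaged in $\gcd(mn,P_F)=1$, and the three defining properties of $\mathcal S_F$, are consumed.
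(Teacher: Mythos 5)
Your proof takes a genuinely different route from the paper's. Where the paper works entirely with explicit formulas — it sets $\nu\equiv(2\alpha X+\beta Y)Y^{-1}\pmod m$, forms $M(x,y)=mx^2+\nu xy+\frac{\nu^2+\Delta}{4m}y^2$, extracts $(u,v)$ as the first column of a unimodular matrix carrying $f$ to $M$, verifies $m\mid W,Z$ via two explicit congruences, and then establishes the multiplicity by the purely algebraic identity
\[16=\left(\frac{(2au+bv)(2au_0+bv_0)+\Delta vv_0}{am}\right)^2+\Delta\left(\frac{2(uv_0-u_0v)}{m}\right)^2\]
together with $m\mid uv_0-u_0v$ — you instead complete $(X,Y)$ to an $\SL_2(\Z)$ matrix, read off the class-group relation $G\sim\overline{f_1}*f_2$, pin down $[f]=[f_1]$, and reduce the count to the order of the proper automorphism group. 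Both are Gauss composition; your version is cleaner conceptually, the paper's is more self-contained and verifiable. Your observation $\det M=f(u,v)$ is correct (I checked it) and is a nice way to package the forward direction and the formula $(X,Y)^T=\operatorname{adj}(M)(w,z)^T$.

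Two remarks, one minor and one that matters. Minor: in your first paragraph you claim $\gcd(X,Y)=1$ in the tuple-to-$(X,Y)$ direction "using the representation together with $\gcd(mn,\prod g(1,0))=1$." That argument does not work: from $\gcd(X,Y)\mid m$ and, symmetrically, $\gcd(X,Y)\mid n$, one needs $\gcd(m,n)=1$ to conclude, and the paper explicitly flags this (the coprimality of $m$ and $n$ is added as a hypothesis only in Proposition \ref{bqf2}, not \ref{bqf1}). This is harmless for the proposition as stated, since only the $(X,Y)\to$ tuple direction is claimed. The more substantial point is the multiplicity count: "conversely every admissible tuple arises this way. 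The residual freedom is to replace $(u,v)$ by its image under a proper automorph of $f$" is asserted without proof, and this is precisely the delicate part. To justify it one must show that an \emph{arbitrary} admissible tuple $(f',u',v',w',z')$ forces $f'=f$ and that $(u',v')$ determines the same residue $\nu\pmod{2m}$ (equivalently, the same root $e\pmod{2mn}$) as the construction from $(X,Y)$; only then does the classical bijection between primitive representations with a fixed root and $\operatorname{Aut}^+(f)$-orbits apply. The paper bypasses this by deriving the identity displayed above, from which $uv_0-u_0v=0$ and $(u_0,v_0,w_0,z_0)=\pm(u,v,w,z)$ follow directly when $\Delta>4$, and by a short explicit computation in the two exceptional cases $\Delta\in\{3,4\}$. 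Your approach can be completed, but as written the key verification is missing.
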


\begin{proof}Choose an integer $\nu$ such that $\nu\equiv (2\alpha X+\beta Y)Y^{-1}\imod m$. Then $4m|\nu^2+\Delta$ and we define the primitive binary quadratic form
\[
M(x,y)=mx^2+\nu xy+\frac{\nu^2+\Delta }{4m}y^2.
\]
Thus $M(x,y)$ is properly equivalent to some $f(x,y)=ax^2+bxy+cy^2\in\mathcal{S}_F$. By construction, there exist integers $u, v, r, s$ such that $us-rv=1$ and
\begin{equation}
\label{equivalence}
M(x,y)=f(ux+ry, vx+sy).
\end{equation}
Therefore, $m=M(1,0)=f(u,v)=au^2+buv+cv^2$. By comparing the coefficients of $xy$ in \eqref{equivalence}, we deduce that
\[
\nu=2aur+bus+brv+2cvs.
\]
Consequently, we have $\nu v\equiv-(2au+bv)\imod m$ and therefore
\begin{equation}
\label{cong1}
(2\alpha X+\beta Y)v\equiv -(2au+bv)Y\imod m.
\end{equation}
By $(\nu^2+\Delta)vY \equiv0\imod m$, we obtain
\begin{equation}
\label{cong2}
(2au+bv)(2\alpha X+\beta Y)-\Delta vY\equiv0\imod m.
\end{equation}
Now define $W, Z$ as in \eqref{mw} and \eqref{mz}. By \eqref{cong1} and \eqref{cong2}, they are both divisible by $m$. Take $w=W/m, z=Z/m$. Then $n=a\alpha w^2+Bwz+\frac{B^2+\Delta}{4a\alpha}z^2$. Solving \eqref{mw} and \eqref{mz} gives
\[
\begin{split}
\bigg(au+\frac{b+\beta}{2}v\bigg)w+\bigg(\frac{B-\beta}{2\alpha}u+\frac{(b+\beta)B+\Delta-b\beta}{4a\alpha}\bigg)z&=X,\\
-\alpha vw+\bigg(u-\frac{B-b}{2a}v\bigg)z&=Y.
\end{split}
\]
These equations also imply that $\gcd(u,v)|\gcd(X,Y)$ and $\gcd(z,w)|\gcd(X,Y)$, hence $\gcd(u,v)=\gcd(z,w)=1$.\\

The choice of $f(x, y)$ is unique since $f(x, y)$ is properly equivalent to
\[
M(x,y)=mx^2+\nu xy+\frac{\nu^2+\Delta }{4m}y^2,
\]
and it can easily be checked that for any $M'(x,y)$ constructed with a different $\nu' \equiv \nu \pmod{m}$ and $(\nu')^2 + \Delta \equiv 0 \pmod{4m}$, $M,M'$ are properly equivalent. \\

Now suppose $\Delta>4$ and there is another tuple $(u_0, v_0, w_0, z_0)$ that satisfies the requirement. It is straightforward to verify that
\begin{equation}
\label{am}
16=\bigg(\frac{(2au+bv)(2au_0+bv_0)+\Delta vv_0}{am}\bigg)^2+\Delta \bigg(\frac{2(uv_0-u_0v)}{m}\bigg)^2.
\end{equation}
Further, it is easy to see that $uv_0 - u_0 v \equiv 0 \pmod{m}$. It thus follows that 
\[4 \Delta > 16 \geq \Delta \left(\frac{2(uv_0 - u_0 v)}{m}\right)^2,\] 
whence $uv_0 - v_0 u = 0$. It then follows that $(u_0, v_0, w_0, z_0) = \pm (u,v,w,z)$. \\

If $\Delta=4$, we can take $\mathcal{S}_F=\{x^2+y^2\}$ and $B=\beta$ (note that $\beta$ must be even). Then \eqref{am} becomes
\[
1=\bigg(\frac{uu_0+ vv_0}{m}\bigg)^2+\bigg(\frac{uv_0-u_0v}{m}\bigg)^2
\]
and this has 4 pairs of solutions $(u_0, v_0)$. The case for $\Delta=3$ is similar.
\end{proof}
On the other hand, if we have $f(u, v)=m$ and $f^*(w, z)=n$, by Dirichlet composition they can produce $X, Y\in\Z$ such that $F(X, Y)=mn$ via
\[
\begin{split}
\bigg(au+\frac{b+\beta}{2}v\bigg)w+\bigg(\frac{B-\beta}{2\alpha}u+\frac{(b+\beta)B+\Delta-b\beta}{4a\alpha}v\bigg)z&=X,\\
-\alpha vw+\bigg(u-\frac{B-b}{2a}v\bigg)z&=Y.
\end{split}
\]
However even if $\gcd(u, v)=\gcd(w, z)=1$, it does not guarantee $\gcd(X, Y)=1$. We are not too far away because by \eqref{mw} and \eqref{mz}, we have
\[\gcd(X, Y)|\gcd(mw, mz)=m.\]
Similarly $\gcd(X, Y)|n$. Hence if $\gcd(m, n)=1$, we have $\gcd(X, Y)=1$. Furthermore, if $\gcd(mn, P_F)=1$, we also deduce that $\gcd(X, \gamma)=1$ since $\gamma|P_F$. From Proposition \ref{bqf1} and the discussion above, we conclude that
\begin{proposition}
\label{bqf2}
If $\gcd(m, n)=\gcd(mn, P_F)=1$, we have
\begin{equation}
\label{adecomp}
a_{mn}=\frac{1}{2}\sum_{f\in\mathcal{S}_F}\sum_{\substack{(w, z)\in\mathbb{Z}^2\\f^*(w, z)=m\\(w, z)=1}}\sum_{\substack{(u,v)\in\mathbb{Z}^2\\f(u,v)=n\\(u, v)=1}}\lam(\calQ_F(u, v; w, z))
\end{equation}
where
\[
f^*(w, z)=a\alpha w^2+Bwz+\frac{B^2+\Delta}{4a\alpha}z^2
\]
and
\[
\calQ_F(u, v; w, z)=\alpha vw+\bigg(u-\frac{B-b}{2a}v\bigg)z.
\]
Here we set $\lam(\ell)=0$ if $\ell<0$.  If $\Delta=3$ or $4$, the constant $\frac{1}{2}$ before the summation should be $\frac{1}{6}$ and $\frac{1}{4}$ respectively.
\end{proposition}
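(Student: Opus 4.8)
The plan is to combine the decomposition of Proposition \ref{bqf1} with its converse --- the ``synthesis'' discussion in the paragraph just above the statement --- together with the observations made there about the gcd conditions. First, unwind the definition of $a_{mn}$: by \eqref{defa}, $a_{mn}=\sum_{F(X,Y)=mn,\ \gcd(X,\gamma Y)=1}\lam(X)$. The condition $\gcd(X,\gamma Y)=1$ is the conjunction of $\gcd(X,Y)=1$ and $\gcd(X,\gamma)=1$, and the second is automatic here: $\gamma\mid Q_F\mid P_F$ and $\gcd(mn,P_F)=1$, so a prime dividing both $X$ and $\gamma$ would divide $F(X,Y)=mn$. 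Hence $a_{mn}=\sum_{F(X,Y)=mn,\ \gcd(X,Y)=1}\lam(X)$, and the remaining task is to reparametrize this sum.

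Next, assemble the bijection. Since $\gcd(m,n)=\gcd(mn,P_F)=1$, Proposition \ref{bqf1} sends each $(X,Y)$ with $F(X,Y)=mn$ and $\gcd(X,Y)=1$ to a form $f\in\mathcal{S}_F$ and integers $u,v,w,z$ with $\gcd(u,v)=\gcd(w,z)=1$, $f(u,v)=m$, $f^*(w,z)=n$, and $X,Y$ given by \eqref{mw}--\eqref{mz}; conversely, the synthesis discussion shows that every such tuple produces an $(X,Y)$ with $F(X,Y)=mn$ and, because $\gcd(m,n)=1$, with $\gcd(X,Y)=1$. These two maps are mutually inverse on tuples taken modulo the relevant unit action, and the uniqueness clause of Proposition \ref{bqf1} tells us the fibres have size $2$ when $\Delta>4$ and size $6$, $4$ when $\Delta=3,4$. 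Since $a_{mn}=a_{nm}$, I may run this with the roles of $m$ and $n$ interchanged, which aligns the index set with the one on the right-hand side of the proposition (first the sum over $f^*(w,z)=m$, then over $f(u,v)=n$); and solving the system \eqref{mw}--\eqref{mz} for the variable that $\lam$ records, one checks that in the coordinates $u,v,w,z$ it becomes $\calQ_F(u,v;w,z)=\alpha vw+\bigg(u-\frac{B-b}{2a}v\bigg)z$. Summing over the bijection and dividing by the fibre size gives the claimed identity, and extending the inner sums over all of $\Z^2$ changes nothing since $\lam$ vanishes on negative integers.

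The point that demands genuine care is this middle step: checking that the decomposition of Proposition \ref{bqf1} and the synthesis map really are two-sided inverses on precisely the right index sets --- in particular that the coprimality conditions transfer in both directions --- keeping track through the relabelling of $m$ and $n$ of exactly which linear combination of $u,v,w,z$ the argument of $\lam$ turns into, so that it comes out as $\calQ_F$ rather than as another linear form of the same shape, and confirming that the fibre size equals the number of units of the order of discriminant $-\Delta$, which is what produces the normalizing constants $\tfrac12$, $\tfrac16$, $\tfrac14$. By contrast, the gcd reduction of the first step and the algebra behind the formula for $\calQ_F$ are routine once Proposition \ref{bqf1} and \eqref{mw}--\eqref{mz} are in hand.
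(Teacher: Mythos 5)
Your three structural moves match the paper's (implicit) argument exactly: the paper gives no written proof of Proposition \ref{bqf2}, deriving it ``from Proposition \ref{bqf1} and the discussion above,'' and your plan is the correct reading of that. Dropping $\gcd(X,\gamma)=1$ using $\gamma\mid P_F$ and $\gcd(mn,P_F)=1$, packaging Proposition \ref{bqf1} together with the synthesis paragraph into a map whose fibres are the unit orbits, and reading off the constants $\tfrac12,\tfrac14,\tfrac16$ from the $\pm(u,v,w,z)$ (resp.\ $4$-, $6$-element) uniqueness clause --- all of this is right and is all the paper intends.

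The point you flag as ``demanding genuine care'' is, however, the one you gloss over, and it does not check out as stated. Applying Proposition \ref{bqf1} with the roles of $m$ and $n$ interchanged (so $f(u,v)=n$, $f^*(w,z)=m$) gives, by the computation in bqf1's proof, the first argument of $F$ as
\[
X=\bigg(au+\tfrac{b+\beta}{2}v\bigg)w+\bigg(\tfrac{B-\beta}{2\alpha}u+\tfrac{(b+\beta)B+\Delta-b\beta}{4a\alpha}v\bigg)z ,
\]
while the second is $Y=-\alpha vw+(u-\tfrac{B-b}{2a}v)z$. Since $a_N$ sums $\lam$ over the first coordinate, the bijection yields $\lam(X)$, not $\lam(\calQ_F)$; and $\calQ_F(u,v;w,z)=\alpha vw+(u-\tfrac{B-b}{2a}v)z$ is neither $X$ nor $\pm Y$ (it is $Y$ with the sign of the $\alpha vw$ term flipped). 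Your assertion that ``solving the system \eqref{mw}--\eqref{mz} for the variable that $\lam$ records, one checks that it becomes $\calQ_F$'' is therefore false in general; it only happens to produce the same \emph{sum} in the Gaussian case $F=x^2+y^2$ because there $f(u,v)=u^2+v^2$ is symmetric, so summing $\lam(vw+uz)$ over $f(u,v)=n$ equals summing $\lam(uw+vz)$. For a generic $f$ with $b\neq 0$ that relabelling is unavailable. So either Proposition \ref{bqf2}'s formula for $\calQ_F$ is not the one that the bijection produces (and the correct statement should carry $\lam(X)$ with $X$ as above), or there is a further identity you need to supply; in either case you cannot dismiss this as a routine check --- it is precisely the place where your proof, as written, has a gap.
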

The condition $\gcd(mn, P_F)=1$ also implies
\begin{equation}
\label{uBv}
\gcd(\calQ_F(u, v; w, z), \alpha)=1
\end{equation}
and we will need this later in Section \ref{bi sums}.

\section{Setting up a sieve problem}

After the algebraic preparations we now present the general framework of sieving with which we aim to find prime values in the sequence $F(\ell,m)$ with $\ell$ restricted to a thin sequence. In order to prove Theorem \ref{thm1} it suffices to consider the sum
\begin{equation} \label{Pchi} P(X; \chi)=\sum_{\substack{N\leq X\\\gcd(N, P_F)=1}}a_N\chi(N)\Lam(N),\end{equation}
where $\Lam(N)$ is the von Mangoldt function and $a_N$ is defined as in (\ref{defa}) and $\chi$ is a Dirichlet character modulo $q$. The character $\chi$ is present to detect the congruence condition modulo $q$. \\

Let $Y,Z>1$ be such that $X>YZ$. Put
\begin{equation} \label{bxyz} B(X;Y,Z;\chi):= \sum_{\substack{bd\leq X\\ b>Y\\\gcd(bd, P_F)=1}}\mu(b)\left(\sum_{\substack{c\mid d\\ c>Z}}\Lam(c)\right)a_{bd}\chi(bd),\end{equation} 
and 
$$\del(N;Y,Z)=\sum_{b>Y}\frac{\mu(b)}{b}\left\{\rho(b) \log \frac{N}{b}-\sum_{c\leq Z}\frac{\Lam(c)}{c}\rho(bc)\right\}.$$

Then we have the analogue of Proposition 9 in \cite{FI}.

\begin{proposition}\label{prop9}
Let $Y,Z\geq 1$ and $X>YZ$. Then we have the identity
\begin{equation*}
\begin{split}
P(X; \chi)=&\sum_{\substack{N\leq X\\\gcd(N, P_F)=1}}\sum_\ell \lam(\ell;N)(H_{F, q}+\del(N;Y,Z))\\ &+B(X;Y,Z;\chi)+R(X;Y,Z;\chi)+P(Z;\chi)
\end{split}
\end{equation*}
where
\[
H_{F, q}=\prod_{p\nmid qP_F}\left(1-\frac{\rho(p)}{p}\right)\left(1-\frac{1}{p}\right)^{-1}
\]
and $R(X;Y,Z,\chi)$ is given by (\ref{RT1}). 
\end{proposition}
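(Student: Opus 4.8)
The plan is to follow, \emph{mutatis mutandis}, the proof of Proposition~9 in \cite{FI}, with the arithmetic function $\rho$ playing the role of their root count. The starting point is the Vaughan-type decomposition of the von Mangoldt function obtained by inserting $\mu=\mu\mathbf{1}_{\le Y}+\mu\mathbf{1}_{>Y}$ and $\Lam=\Lam\mathbf{1}_{\le Z}+\Lam\mathbf{1}_{>Z}$ into the convolution identities $\Lam=\mu\ast\log$ and $\log=\mathbf{1}\ast\Lam$ and using $\mu\ast\mathbf{1}=\delta$: for every $N\ge 1$,
\[
\Lam(N)=\Lam(N)\mathbf{1}_{N\le Z}
+\sum_{\substack{b\mid N\\b>Y}}\mu(b)\sum_{\substack{c\mid N/b\\c>Z}}\Lam(c)
+\sum_{\substack{b\mid N\\b\le Y}}\mu(b)\Bigl(\log\tfrac Nb-\sum_{\substack{c\mid N/b\\c\le Z}}\Lam(c)\Bigr).
\]
Multiplying through by $a_N\chi(N)$ and summing over $N\le X$ with $\gcd(N,P_F)=1$, the first term on the right is exactly $P(Z;\chi)$, while reindexing $N=bd$ identifies the second term with $B(X;Y,Z;\chi)$. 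The whole matter thus reduces to the third, ``Type~I'' sum, in which both divisor variables satisfy $b\le Y$, $c\le Z$.

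To treat it I would insert $a_N=\sum_{F(\ell,m)=N,\ \gcd(\ell,\gamma m)=1}\lam(\ell)$, interchange summations so that the linear variable $m$ is innermost, and use positive definiteness of $F$ to see that for fixed $\ell$ the constraint $F(\ell,m)\le X$ confines $m$ to an interval. The arithmetic conditions on $m$ are $d\mid F(\ell,m)$ with $d=b$ or $d=bc$; since $F(\ell,\ell\nu)=\ell^2F(1,\nu)$, and since $p\mid\ell$ together with $p\mid F(\ell,m)$ would force $p\mid\gamma m^2$, contradicting $\gcd(\ell,\gamma m)=1$, one has $\gcd(\ell,d)=1$ automatically, so that exactly $\rho(d)$ residue classes modulo $d$ solve $d\mid F(\ell,m)$. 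I would then replace, for each $\ell$ and each such $d$, the divisibility condition by its density $\rho(d)/d$, keeping the character $\chi(F(\ell,m))$ and the slowly varying factor $\log\tfrac{F(\ell,m)}{b}$ intact; since $\chi(F(\ell,m))\ne 0$ forces $\gcd(F(\ell,m),q)=1$ and hence $\gcd(d,q)=1$, the Chinese Remainder Theorem separates the moduli and the error committed is controlled by $\sum_{r\bmod d}\bigl(\mathbf{1}_{d\mid F(\ell,r)}-\rho(d)/d\bigr)=0$. The accumulated discrepancy is the term $R(X;Y,Z;\chi)$ of \eqref{RT1}, and what remains is
\[
\sum_{\substack{N\le X\\\gcd(N,P_F)=1}}a_N\chi(N)\sum_{b\le Y}\frac{\mu(b)}{b}\Bigl(\rho(b)\log\tfrac Nb-\sum_{c\le Z}\frac{\Lam(c)}{c}\rho(bc)\Bigr),
\]
which is $\sum_{N\le X,\ \gcd(N,P_F)=1}\sum_\ell\lam(\ell;N)$ multiplied by the inner $b$-sum.

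It remains to evaluate that $b$-sum. By definition $\del(N;Y,Z)$ is precisely its tail over $b>Y$, so it suffices to show that the completed sum, over $b$ coprime to $qP_F$, equals $H_{F,q}$. One uses here that $\sum_b\mu(b)\rho(b)/b$ over such $b$ vanishes --- the split primes alone force $\prod_{p\nmid qP_F}(1-\rho(p)/p)=0$ --- so the term $\log N$ and all dependence on $Z$ drop out, leaving the constant $-\sum_b\mu(b)\rho(b)(\log b)/b=g'(1)$ with $g(s)=\prod_{p\nmid qP_F}(1-\rho(p)p^{-s})$; factoring $1-\rho(p)p^{-s}=(1-p^{-s})(1-\chi_{-\Delta}(p)p^{-s})(1+O(p^{-2s}))$ and using $\mathrm{Res}_{s=1}\zeta_{\Q(\sqrt{-\Delta})}(s)=L(1,\chi_{-\Delta})$ shows $g'(1)=H_{F,q}$, which completes the proof. \emph{The main difficulty is organisational rather than conceptual}: one must propagate the conditions $\gcd(\ell,\gamma m)=1$, $\gcd(N,P_F)=1$ and $\gcd(N,q)=1$ coherently through every interchange of summation, arrange the main term so that $\chi$ survives unaveraged (so that the later sum over characters restores $F(\ell,m)\equiv a\imod q$), and bound all error terms with enough uniformity in $q\le(\log X)^N$ to obtain the exact shape of $R(X;Y,Z;\chi)$ needed in the subsequent sections; by contrast the identification of the completed $b$-sum with $H_{F,q}$, though it pins down the precise main term, is only a routine Euler-product manipulation.
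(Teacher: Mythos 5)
Your proposal rederives the proposition from first principles: Vaughan's identity, replacement of $A_d$ by the expected density $\rho(d)/d$ in the Type~I range, and identification of the completed $b$-sum with $H_{F,q}$ by an Euler-product calculation. The paper takes a genuinely different and much shorter route: it invokes Proposition~9 of Fouvry--Iwaniec as a black box, so that the only thing requiring proof is their regularity hypothesis, condition~(7.16), for the new root-counting function $\rho$. That hypothesis is a quantitative decay estimate for the partial sums $\sum_{n\le x}\mu(n)\rho(cn)$, and the paper's whole proof consists of establishing it: the Dirichlet series $\sum_n\mu(n)\rho(cn)n^{-s}$ is compared with $\mathcal{G}(s)/(\zeta(s)L(s,\chi_{-d}))$, with $\mathcal{G}$ holomorphic for $\Re s>\tfrac12$, and a bound of Siegel--Walfisz strength, $\sum_{n\le x}\mu(n)\rho'_{-d}(n)\ll x\exp(-c_d\sqrt{\log x})$, is then extracted from the zero-free region of $L(s,\chi_{-d})$ by the Selberg--Delange method, followed by partial summation.

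That decay estimate is precisely what your argument tacitly assumes. The assertion that $\sum_b\mu(b)\rho(b)/b$ over $b$ coprime to $qP_F$ ``vanishes,'' and the passage to the constant $g'(1)$ with all $Z$-dependence dropping out, are not purely formal Euler-product manipulations: to justify the rearrangements, to let $Z\to\infty$, and above all to bound the tail $\del(N;Y,Z)$ (which must eventually be shown to contribute $O(X(\log X)^{-B})$), one needs quantitative cancellation in $\sum_{b\le x}\mu(b)\rho(bc)$, uniformly in the fixed prime power $c$. This is the one genuinely new analytic ingredient the authors supply for a general primitive positive definite $F$, and it is missing from your write-up. You should also double-check the final identification: since $g(1)=0$ and $(s-1)\zeta(s)\to1$ as $s\to1$, one has
\[
g'(1)=\lim_{s\to1}g(s)\zeta(s)=\prod_{p\nmid qP_F}\frac{1-\rho(p)/p}{1-1/p}\cdot\prod_{p\mid qP_F}\Big(1-\frac1p\Big)^{-1},
\]
which agrees with the constant in Theorem~\ref{thm1} but differs from the one displayed in Proposition~\ref{prop9} by the factor $\prod_{p\mid qP_F}(1-1/p)^{-1}$; you should say explicitly which normalisation you are proving, and reconcile the two.
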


From Proposition \ref{prop9} we see that Theorem \ref{thm1} follows provided that acceptable estimates for $\delta(N; Y, Z), B(X;Y, Z, \chi), R(X; Y, Z, \chi), P(Z; \chi)$ can be obtained. We will give appropriate bounds for all but $B(X;Y,Z, \chi)$ in this section. \\

When sieving for prime values of $N$ we will need to study sums of the type
\begin{equation} \label{Ad} A_d(X; \chi):= \sum_{\substack{N\leq X\\ N\equiv 0 \imod d\\\gcd(N, P_F)=1}}a_N\chi(N) \end{equation}
for $d$ a positive integer. Note that $A_d(X; \chi)=0$ if $\gcd(d, qP_F)>1$. If $N=F(\ell, m)\equiv0\imod d$ then from $\gcd(\ell, \gamma m)=1$ we immediately have $\gcd(\ell, d)=1$ as well. Hence we expect that $A_d(X; \chi)$ is approximated by
\begin{align*} M_d(X; \chi) & = \frac{\rho(d)}{d}\sum_{\substack{N\le X\\\gcd(N, P_F)=1}}\sum_{\gcd(\ell, d)=1}\lam(\ell; N) \\ 
& = \frac{\rho(d)}{d}\mathop{\sum\sum}_{\substack{F(\ell, m)\le X\\\gcd(\ell, \gamma md)=1\\\gcd(F(\ell, m), P_F)=1}}\lam(\ell)\chi(F(\ell, m))\end{align*}
when $\gcd(d, qP_F)=1$, where
$$\lam(\ell;N)=\chi(N)\sum_{\substack{m\in\mathbb{Z}\\F(\ell,m)=N\\\gcd(\ell, \gamma m)=1}}\lam(\ell);$$
and $M_d(X; \chi)=0$ otherwise. With this we set
\begin{equation}\label{defR}
R_d(X; \chi)=A_d(X; \chi)-M_d(X; \chi).
\end{equation}
For a parameter $D$ we define the complete remainder term $R(X,D; \chi)$ as
\begin{equation}\label{Alem main}R(X,D; \chi)=\sum_{d\leq D}|R_d(X; \chi)|.\end{equation}
As $F(x,y)$ is assumed to be positive definite, there exists a positive constant $C_1$ only depending on $F$, such that $F(\ell,m)\leq X$ implies that $|\ell|, |m|\leq C_1\sqrt{X}$. Now put

\begin{equation} \label{RT1} R(X;Y,Z;\chi)=\sum_{b\leq Y}\mu(b)\left\{ R_b(X; \chi)\log\frac{X}{b}-\int_1^XR_b(t; \chi)\frac{\d t}{t}-\sum_{c\leq Z}\Lam(c)R_{bc}(X; \chi)\right\}.\end{equation}
As in \cite{FI} we have the bound
\begin{equation} \label{Alem bd} |R(X;Y,Z;\chi)|\leq R(X,YZ; \chi) \log X +\int_1^X R(t,Y; \chi) \frac{\d t}{t}.\end{equation}

\begin{proof}[Proof of Proposition \ref{prop9}] Our goal is to derive Proposition \ref{prop9} from \cite[Proposition 9]{FI}. To do so we must check that that condition (7.16) in \cite{FI} holds with the function $\rho(bc)$, with $c$ a fixed integer. Let $-D(l)$ denote the discriminant of the quadratic polynomial $F(x,l)$, and let $-d = -d(l)$ be the unique fundamental discriminant such that $\Q(\sqrt{-D(l)}) = \Q(\sqrt{-d(l)})$. Let $\rho'_d(n)$ be the multiplicative function 
\[\rho'_{-d}(n) = \sum_{m | n} \left(\frac{-d}{m}\right).\]
Consider the Dirichlet series
\[\mathfrak{D}(s) = \sum_{n=1}^\infty \frac{\mu(n) \rho(cn)}{n^s}. \]
Then $\mathfrak{D}(s)$ differs from the series
\[\calE(s) = \sum_{n=1}^\infty \frac{\mu(n) \rho'_{-d}(n)}{n^s}  =\prod_p \left(1 - \frac{\rho'_{-d}(p)}{p^s}\right) \]
by a holomorphic factor. Here
\[\rho'_{-d}(p) = \begin{cases} 1 & \text{if } p | d \\ \\ 2 & \text{if } \left(\frac{-d}{p}\right) = 1 \\ \\ 0 & \text{if } \left(\frac{-d}{p}\right) = -1. \end{cases}\]
It is then apparent that 
\[\calE(s) = \frac{\mathcal{G}(s)}{\zeta(s) L(s, \chi_{-d})},\]
where
\[\mathcal{G}(s) = \prod_{p : \chi_{-d}(p) = 1} \left(1 + \frac{1}{p^{2s} - 2p^s} \right)^{-1} \prod_{p : \chi_{-d}(p) = 1} \left(1 - \frac{1}{p^{2s}}\right)^{-1}. \]
Plainly, $\mathcal{G}(s)$ converges and is holomorphic for $\Re(s) > 1/2$. We then obtain the bound 
\[\sum_{n \leq X} \mu(n) \rho'_{-d}(n) = O\left(X \exp\left(-c_d \sqrt{\log X}\right) \right)\]
for some positive number $c_d$ by standard estimates of the zero-free region of the Dirichlet $L$-function $L(s,\chi_{-d})$ and the Selberge-Delange method. The desired conclusion then follows from partial summation. 
\end{proof}

The terms $B(X;Y,Z,\chi), R(X;Y,Z,\chi)$ in Proposition \ref{prop9} will be controlled by the following lemmas: 

\begin{lemma} \label{Alem} Suppose $q\in\N$ and $q\le (\log X)^N$ for some $N>0$. Let $\varepsilon >0$. Assume that $Y,Z>1$ and $YZ<X^{1-\varepsilon}$. Then we have
$$R(X;Y,Z; \chi)\ll_{\varepsilon} X^{1-\varepsilon/5}.$$
\end{lemma}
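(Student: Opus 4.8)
The plan is to bound $R(X;Y,Z;\chi)$ through inequality \eqref{Alem bd}, so the whole problem reduces to bounding the complete remainder $R(X,D;\chi) = \sum_{d\le D}|R_d(X;\chi)|$ for $D \le X^{1-\eps}$. Writing out $R_d(X;\chi) = A_d(X;\chi) - M_d(X;\chi)$ explicitly in terms of the quadratic form, the main term $M_d$ is the expected density $\frac{\rho(d)}{d}$ times the full sum over lattice points $F(\ell,m)\le X$ with $\gcd(\ell,\gamma m d)=1$, $\gcd(F(\ell,m),P_F)=1$, weighted by $\lam(\ell)\chi(F(\ell,m))$. First I would reduce to a pointwise-in-$\ell$ count: fix $\ell$ (with $|\ell|\le C_1\sqrt X$, $\gcd(\ell,\gamma d)=1$), and estimate the inner sum over $m$ with $F(\ell,m)\equiv 0\imod d$, $F(\ell,m)\le X$, against its expected value. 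Since $F(\ell,m)$ as a polynomial in $m$ has the number of roots mod $d$ equal to $\rho(d)$ (using primitivity and $\gcd(\ell,d)=1$ and $\gcd(d,qP_F)=1$), the values of $m$ satisfying the congruence lie in $\rho(d)$ residue classes mod $d$; within $|m|\le C_1\sqrt X$ each class contributes roughly $\frac{2C_1\sqrt X}{d}$ points, but we also need the coprimality/character constraints handled, which only introduce bounded multiplicative corrections that are already built into the definition of $M_d$.

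The key technical point is that $R_d(X;\chi)$ for an individual $d$ is governed by the discrepancy of the roots $\nu$ of $F(1,\nu)\equiv 0\imod d$ (equivalently of $F(\ell,\cdot)\equiv 0$ after scaling by $\bar\ell$) as they are distributed in $[0,d)$. This is exactly where the spacing/equidistribution of roots of a quadratic congruence enters, and here I would invoke the optimal spacing result of Balog--Blomer--Dartyge--Tenenbaum \cite{BBDT} (the general-discriminant analogue of the Fouvry--Iwaniec input), which gives that $\sum_{d\le D}|R_d(X;\chi)|$ — after completing the $m$-sum via Fourier/Poisson or directly via the discrepancy bound — is $\ll X^{1-\eps/4}$ (say) when $D\le X^{1-\eps}$, uniformly over the $\ll (\log X)^A$-worth of values of $\ell$ once one sums $|\lam(\ell)|\le C\log^A\ell$ over $\ell$. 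Concretely: summing the per-$\ell$ discrepancy over $d\le D$ against the Farey-type spacing estimate yields a saving of a power $X^{\eps/4}$ over the trivial bound $\sum_{d\le D}\frac{\rho(d)}{d}X \ll X^{1+o(1)}$, and then summing over the $\ll \sqrt X$ values of $\ell$ with their $\log^A$ weights still leaves a genuine power saving because the spacing estimate already accounts for the dependence on the coefficient $\ell$.

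From there the rest is bookkeeping: plug the bound $R(X,D;\chi)\ll X^{1-\eps/4}$ into \eqref{Alem bd}, noting $\log X$ and the integral $\int_1^X R(t,Y;\chi)\,\frac{\d t}{t}$ each cost only an extra $\log X$ factor, which is absorbed by slightly weakening the exponent to $\eps/5$; the hypothesis $YZ<X^{1-\eps}$ is used precisely to guarantee $D=YZ\le X^{1-\eps}$ so the spacing estimate applies, and the condition $q\le(\log X)^N$ ensures the character $\chi$ mod $q$ contributes at most a $(\log X)^{O(1)}$ loss (it can be treated as a bounded-modulus twist that splits the lattice-point count into $\phi(q)$ further progressions). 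The main obstacle — and the reason this lemma is not routine — is establishing the power-saving bound on $\sum_{d\le D}|R_d(X;\chi)|$ uniformly in $\ell$; naive completion of the $m$-sum gives error terms of size $\sum_{d\le D}(\text{something})$ that are only $O(X^{1+o(1)})$, so one genuinely needs the arithmetic of well-spaced roots of the quadratic congruence (the \cite{BBDT} estimate) to extract the saving, rather than any elementary argument.
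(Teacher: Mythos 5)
Your high-level skeleton matches the paper's: reduce Lemma \ref{Alem} via the inequality \eqref{Alem bd} to a bound on the complete remainder $R(X,D;\chi)$ with $D=YZ<X^{1-\eps}$ (together with the shorter $R(t,Y;\chi)$ inside the integral), and you correctly identify Proposition 3 of Balog--Blomer--Dartyge--Tenenbaum \cite{BBDT} as the key arithmetic input and note that $q^3\le(\log X)^{3N}$ and the $\log$-losses are absorbed by weakening the exponent from $\eps/4$ to $\eps/5$.

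However, there is a genuine gap in the way you propose to extract the power saving. Fixing $\ell$ and bounding the per-$\ell$ discrepancy over $d\le D$ cannot work as a route to the bound. For a fixed $\ell$, the error in counting $m$ with $F(\ell,m)\equiv 0\imod d$ and $F(\ell,m)\le X$ is trivially $O(1)$ per admissible residue class, hence $O(\rho(d))$; summing over $d\le D$ gives $O(D\log D)$ per $\ell$, and then summing over the $\asymp\sqrt X$ values of $\ell$ gives $\asymp \sqrt X\,D\log X$, which for $D=X^{1-\eps}$ is $\asymp X^{3/2-\eps}\log X$ --- far worse than trivial, and with no place for the BBDT estimate to be inserted. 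Proposition \ref{largesieve} is not a per-$\ell$ spacing statement; it is a large-sieve inequality controlling the bilinear form $\sum_{D\le d\le 2D}\sum_{F(\nu,1)\equiv 0\imod d}\bigl|\sum_{n\le N}\alpha_n e(\nu n/d)\bigr|^2$, and to use it one must first Poisson-sum the $m$-variable, producing an exponential $e(h\ell\nu/d)$, and then treat $h$ and $\ell$ \emph{together} as a single sequence indexed by $n=h\ell$ (with divisor-function multiplicity) before Cauchy--Schwarz and the large sieve are applied. The $\ell$-sum has to be absorbed inside the object the large sieve is applied to; it cannot be left outside and handled afterwards.

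The paper's route through Lemmas \ref{lemR3}, \ref{lemR2}, \ref{lemR1} does exactly this: smooth the count with $w$ as in \eqref{FF}; apply Poisson summation to get $F_{a,\ell}(h/cq)$; apply Cauchy--Schwarz and Proposition \ref{largesieve} with $\alpha_n=\sum_{h\ell=nq+h_0\ell_0}\alpha_{h,\ell,t}(u)$ to obtain \eqref{R1}--\eqref{R2}; remove the smoothing at a cost $O(Y\log^{A+3}X)$ and optimize $Y=D^{1/4}X^{3/4}$ to reach $R(X,D;\chi)\ll_\eps q^3D^{1/4}X^{3/4+\eps}$. Substituting this into \eqref{Alem bd} with $D=YZ<X^{1-\eps}$ gives $X^{1-\eps/5}$ as you anticipated, but the Poisson/bilinear-large-sieve step and the smoothing/unsmoothing optimization are the substance of the argument and your sketch does not supply them.
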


\begin{lemma}\label{Blem}
Suppose $q\in\N$ and $q\le (\log X)^N$ for some $N>0$. Let $\theta_1, \theta_2$ be two real numbers such that $1/2<\theta_1<1$ and $0<\theta_2<1-\theta_1$. Then for $Y=X^{\theta_1}$ and $Z=X^{\theta_2}$ and any $B>0$,
$$B(X; Y, Z; \chi)\ll X(\log X)^{-B}.$$
\end{lemma}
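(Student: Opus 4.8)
The plan is to reduce the bilinear sum $B(X;Y,Z;\chi)$ to the standard Fouvry--Iwaniec machinery via the decomposition of $a_{mn}$ furnished by Proposition \ref{bqf2}. First I would split $B(X;Y,Z;\chi)$ along the greatest common divisor of the two variables $b$ and $d$; since $b$ is squarefree and coprime to $P_F$, writing $b = b_1 b_2$ with $b_1 \mid d^\infty$ and $\gcd(b_2, d)=1$ lets one isolate the genuinely coprime part, so that after a dyadic decomposition in $b$ and $d$ we are left with estimating $\sum_{m \sim M}\sum_{n \sim N}\alpha_m \beta_n a_{mn}\chi(mn)$ with $\gcd(m,n) = \gcd(mn,P_F)=1$, where $\alpha_m$ is essentially $\mu$-like of modulus bounded by a power of $\log$, $\beta_n$ carries the $\Lambda$-weight from the inner sum over $c \mid d$, and $MN \asymp X$ with $M$ in a range dictated by $Y = X^{\theta_1}$, $Z = X^{\theta_2}$. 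The condition $1/2 < \theta_1 < 1$ and $0 < \theta_2 < 1-\theta_1$ guarantees that both $M$ and $N$ lie in a nontrivial bilinear range, i.e. bounded away from $1$ and from $X$.

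The heart of the argument is then to substitute \eqref{adecomp} for $a_{mn}$: for each of the finitely many forms $f \in \mathcal{S}_F$ (a set depending only on $F$), $a_{mn}$ becomes a triple sum over representations $f^*(w,z) = m$, $f(u,v) = n$ of the weight $\lambda\big(\calQ_F(u,v;w,z)\big)$. After this substitution $B(X;Y,Z;\chi)$ is a sum, over $f \in \mathcal{S}_F$, of expressions of exactly the shape treated in \cite{FI} (and, in the form needed for a general quadratic form, in \cite{Frie-I2} and \cite{BBDT}): a bilinear form in $(w,z)$ and $(u,v)$ with the linear expression $\ell = \calQ_F(u,v;w,z)$ playing the role that $\Re(\bar w z)$ plays for $x^2+y^2$. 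One then runs the Cauchy--Schwarz plus large-sieve / Poisson-summation argument of Fouvry--Iwaniec on each such term; the input one needs is the optimal spacing of roots of $F(1,\nu) \equiv 0 \imod d$, which is precisely the substitute result of Balog--Blomer--Dartyge--Tenenbaum \cite{BBDT} alluded to in the introduction, together with the coprimality $\gcd(\calQ_F(u,v;w,z),\alpha)=1$ recorded in \eqref{uBv}. The character $\chi$, of modulus $q \le (\log X)^N$, is harmless: it can be absorbed into the coefficients since it only affects the relevant exponential sums by a bounded-modulus twist, costing at most a power of $\log X$, and the Siegel--Walfisz-type uniformity is built into the final $(\log X)^{-B}$ saving.

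I would organize the write-up as: (i) the gcd-splitting and dyadic reduction to coprime bilinear pieces; (ii) insertion of Proposition \ref{bqf2} to pass to forms of Fouvry--Iwaniec type, one per $f \in \mathcal{S}_F$; (iii) citation/adaptation of the bilinear estimate of \cite{FI} (using \cite{BBDT} and \cite{Frie-I2} for the general-form root-spacing ingredient) applied to each piece, with the $\chi$-twist carried along; (iv) reassembly and bookkeeping of the finitely many error terms to get $B(X;Y,Z;\chi) \ll X(\log X)^{-B}$. The main obstacle is step (iii): verifying that the exponential-sum and large-sieve estimates of \cite{FI}, which were tailored to the Gaussian integers, go through for the composition-type forms $f^*$ and the bilinear variable $\calQ_F$; this is where the spacing result of \cite{BBDT} is essential and where the bulk of the technical work lies — but since the excerpt allows us to invoke the earlier sections, here it suffices to set up the reduction carefully and point to those ingredients. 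A secondary, more routine, difficulty is tracking the dependence on $q$ uniformly, which is handled exactly as the Siegel--Walfisz step in \cite{FI}.
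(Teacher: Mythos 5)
Your broad outline—dyadic and gcd reductions, insertion of Proposition \ref{bqf2} to split $a_{mn}$ into finitely many pieces indexed by $f\in\mathcal{S}_F$, Cauchy--Schwarz and the Fouvry--Iwaniec bilinear machinery—is the right skeleton, and you correctly identify that the ranges $1/2<\theta_1<1$, $0<\theta_2<1-\theta_1$ place $M,N$ in the useful bilinear regime $N^\eps<M<N^{1-\eps}$. But there is a genuine misidentification of the key new ingredient, and two non-routine steps are missing.

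First, the Balog--Blomer--Dartyge--Tenenbaum spacing result (Proposition \ref{largesieve}) plays no role in the proof of Lemma \ref{Blem}. It is the substitute for Fouvry--Iwaniec's root-spacing for $\nu^2+1\equiv0\pmod d$ and is deployed in Section~\ref{sec5} to establish the Type~I remainder bound (Lemma \ref{lemR3}, hence Lemmas \ref{lemR1}--\ref{lemR2} and \ref{Alem}). The bilinear estimate does not pass through any root-spacing statement of that kind. Citing BBDT as ``precisely the input one needs'' here is a red herring that would not lead anywhere if you tried to implement the argument.

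Second, after Möbius inversion removes $\gcd(m,n)=1$, one faces sums $\sum_{r\mid(m,n)}\mu(r)$, and it is not enough to have decomposed $a_{mn}$ once; the paper applies the Dirichlet composition machinery a \emph{second} time, constructing $\mathcal{S}_{f^*}$ and writing $f^*(w,z)=n$ as $g(u_0,v_0)=r$, $g^*(w_0,z_0)=n/r$, so that the bilinear variable $\mathcal{Q}_F$ turns into the explicit linear expression $z_0P-\alpha w_0Q$ in the new variables. This double composition is exactly where the nontrivial class group is overcome; a single application of Proposition \ref{bqf2} does not suffice, and your plan omits this step entirely. Third, the large sieve actually invoked is Proposition~15 of \cite{FI} over $\Z[i]$ (applicable because the composed data is packaged as a function on $\Z[i]$ supported on a residue class mod $\alpha$), and the Siegel--Walfisz-type endgame is Helfgott's Lemma~3.3.6 giving cancellation for $\mu(Q(x,y))$ in fixed congruence classes and sectors. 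Neither of these is routine, and neither appears in your outline. Both need to be named and checked for the general form; the character $\chi$ of modulus $q\le(\log X)^N$ is indeed absorbed harmlessly, as you say, but the $\mu$-cancellation that drives the final $(\log X)^{-B}$ saving comes from Helfgott's result, not from the BBDT spacing.
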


We follow a similar strategy to \cite{FI} in showing that the term $R(X;Y,Z; \chi)$ can be bounded by our Type I estimate from Lemma \ref{lemR1} and the trivial estimate $R(t, Y; \chi)\ll t^{1+\eps}(\log Y)^2$. \\

With all these ingredients we can prove our main theorem and corollaries.

\begin{proof}[Proof of Theorem \ref{thm1}] 
Define $Y, Z$ as in Lemma \ref{Blem}. Together with Lemma \ref{Alem} and Lemma \ref{Blem} we have shown that
\begin{equation}
\label{apply}
\begin{split}
&\sum_{\substack{F(\ell, m)\leq X\\\gcd(\ell, \gamma m)=1\\\gcd(F(\ell, m), P_F)=1}}\lambda(\ell)\chi(F(\ell, m))\Lam(F(\ell, m))\\
=&~H_{F, q}\sum_{\substack{F(\ell, m)\leq X\\\gcd(\ell, \gamma m)=1\\\gcd(F(\ell, m), P_F)=1}}\lam(\ell)\chi(F(\ell, m))+O_{A, B, C, F, N}(X(\log X)^{-B}).
\end{split}
\end{equation}
The condition $\gcd(F(\ell, m), P_F)=1$ on the left can be removed because of the presence of $\Lam(F(\ell, m))$. Hence by orthogonality of $\chi$, it gives
\[
\sum_{\substack{F(\ell, m)\leq X\\F(\ell, m)\equiv a\imod q}}\lambda(\ell)\Lam(F(\ell, m))=H_{F, q}\sum_{\substack{F(\ell, m)\leq X\\\gcd(\ell, \gamma m)=1\\\gcd(F(\ell, m), P_F)=1\\F(\ell, m)\equiv a\imod q}}\lam(\ell)+O_{A, B, C, F, N}(X(\log X)^{-B}).
\]
Finally, we treat the remaining terms in Proposition \ref{prop9}. We can use the trivial bound $P(Z;\chi)\ll_\eps Z^{1+\epsilon}$ for any $\eps >0$. The contribution of the terms with $\del(N;Y,Z)$ is negligible as in (7.18) in \cite{FI}. \end{proof}

\section{Level of Absolute Distribution}\label{sec5} 

In this section we shall obtain Type I estimates that are needed to prove Lemma \ref{Alem} and the corollaries to Theorem \ref{thm1}. The most pressing issue is to control the quantity $R(X;D,\chi)$ given (\ref{Alem main}). To this end, we have the following lemma: 

\begin{lemma}\label{lemR1}
For $1\leq D\leq X$ we have the bound
$$R(X,D; \chi)\ll_\eps q^3D^{1/4}X^{3/4+\eps}.$$
\end{lemma}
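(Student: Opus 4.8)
The plan is to bound $R(X,D;\chi) = \sum_{d \le D} |R_d(X;\chi)|$ by analyzing each $R_d(X;\chi) = A_d(X;\chi) - M_d(X;\chi)$ directly as a sum over lattice points. First I would observe that $R_d(X;\chi) = 0$ unless $\gcd(d, qP_F) = 1$, so we may restrict to such $d$; then, unwinding the definition of $a_N$ and of $\lam(\ell;N)$, we have
\[
A_d(X;\chi) = \sum_{\ell} \lam(\ell) \sum_{\substack{m : F(\ell,m) \le X \\ F(\ell,m) \equiv 0 \imod d \\ \gcd(\ell,\gamma m)=1,\ \gcd(F(\ell,m),P_F)=1}} \chi(F(\ell,m)),
\]
and $M_d(X;\chi)$ is the same with the inner congruence condition $F(\ell,m) \equiv 0 \imod d$ replaced by its expected density $\rho(d)/d$. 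For fixed $\ell$ with $\gcd(\ell,d)=1$, the polynomial congruence $F(\ell,m) \equiv 0 \imod d$ in the variable $m$ has its solutions lying in $\rho(d)$ residue classes modulo $d$ (this is where $\rho(d)$ enters); writing $\chi(F(\ell,m))$ as a function of $m$ modulo $q$ as well, the inner sum over $m$ in a dyadic range $|m| \le C_1\sqrt X$ becomes a sum of a periodic function of period dividing $dq$ over an interval, and the main term $M_d$ subtracts off the average. The error in replacing such a sum by its average over a long interval is $O(dq)$ per value of $\ell$ (or better, but $O(dq)$ suffices), so trivially $|R_d(X;\chi)| \ll \sum_{|\ell| \le C_1\sqrt X} |\lam(\ell)| \cdot dq \ll dq \sqrt X (\log X)^A$, which summed over $d \le D$ gives only $q D^2 \sqrt X (\log X)^A$ — not good enough. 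The real gain must come from completing the sum and exploiting cancellation.

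The key step is therefore: detect the congruence $F(\ell,m) \equiv 0 \imod d$ and the character $\chi(F(\ell,m))$ via additive characters, i.e. use the Poisson summation formula (equivalently, complete the exponential sum) in the variable $m$, isolating the frequency-zero term as the main term $M_d$. The nonzero frequencies contribute an incomplete Gauss-type sum attached to the quadratic $F(\ell,\cdot)$ twisted by $\chi$, of modulus $dq$; such complete sums over $\Z/dq\Z$ are bounded by $O((dq)^{1/2 + \eps} \cdot (\text{small factors}))$ by Weil-type estimates for quadratic exponential sums (they essentially factor as Gauss sums over prime power moduli). After Poisson the $m$-sum of length $\asymp \sqrt X / d$ produces, for each $\ell$, a contribution of shape $\frac{\sqrt X}{dq} \sum_{0 < |h| \ll dq/\sqrt X \cdot (\log X)} |S(h; \ell, d, q)| \cdot \widehat{\Phi}$, and after summing over the $\ll \sqrt X$ values of $\ell$ and invoking $|\lam(\ell)| \ll (\log X)^A$, one arrives at a bound involving $\sum_{d \le D} d^{-1/2} q^{1/2} X^{1/2} \cdot (\sqrt X)$ type expressions; optimizing the cutoff in $h$ and keeping careful track of the $q$-aspect should land on $q^3 D^{1/4} X^{3/4+\eps}$. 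The power $D^{1/4}$ (rather than something like $D^{1/2}$) and the exponent $3/4$ on $X$ strongly suggest that a further averaging over $d$ is performed — likely a large sieve or a Cauchy–Schwarz followed by opening the square and summing a divisor-type quantity over $d$, rather than treating each $d$ in isolation — and that one square-roots a bound that was quadratic in $D$. I would structure it as: (i) reduce to $d$ coprime to $qP_F$ and squarefree (the von Mangoldt / Möbius structure upstream, or a direct argument, allows restricting to squarefree $d$, which is why $\sum^\flat$ is in the notation); (ii) Poisson in $m$, extract $M_d$, bound the dual sum; (iii) Cauchy–Schwarz in $\ell$ (or in $d$) and expand, converting the quadratic exponential sums into a bilinear form to which the arithmetic large sieve inequality applies; (iv) collect terms and optimize.

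The main obstacle I anticipate is step (iii): controlling the quadratic exponential sums $\sum_{m \bmod dq} \chi(F(\ell,m)) e(hm/dq)$ uniformly in all parameters and then averaging them over $d \le D$ and $\ell \ll \sqrt X$ without losing the savings. In the $x^2+y^2$ case of Fouvry–Iwaniec this is clean because the sums are genuine Gauss sums with explicit evaluations; for a general $F = \alpha x^2 + \beta xy + \gamma y^2$ the twist by the linear term $\beta \ell m$ and the presence of $\alpha$ in the leading coefficient mean one must complete the square modulo $4\alpha dq$, track the conductor carefully when $\gcd(\alpha, d)$ or $\gcd(2, dq)$ is nontrivial (the hypothesis $\gcd(d, qP_F)=1$ with $2\alpha\gamma\Delta \mid P_F$ is precisely what keeps $2$, $\alpha$, $\gamma$ out of $d$ and makes the conductor behave), and then feed the resulting bilinear form into the large sieve. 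A secondary technical point is handling the auxiliary condition $\gcd(F(\ell,m),P_F)=1$ and the coprimality $\gcd(\ell,\gamma m)=1$, which I would dispose of by Möbius inversion over divisors of $P_F$ (bounded in terms of $F$ only) before doing Poisson, absorbing the resulting constants into the implied constant. Everything else — the partial summation converting the smooth cutoff at $X$ into $\widehat\Phi$ decay, the bookkeeping of $q$-powers — is routine, and I expect the final exponent $3/4+\eps$ and the $q^3$ to drop out of the optimization once the large sieve is in place.
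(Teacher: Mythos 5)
Your high-level picture — Poisson summation in $m$, isolating the $h=0$ frequency as $M_d$, smoothing the cutoff, and Möbius inversion over the divisors of $P_F$ — matches the broad outline of the paper. But your step (iii) identifies the wrong key tool, and this is a genuine gap rather than a presentational difference.

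You propose to detect the congruence $F(\ell,m)\equiv 0\imod d$ via additive characters and then estimate the resulting complete quadratic exponential sums of modulus $dq$ by Weil-type Gauss sum bounds of size $(dq)^{1/2}$. This cannot produce the claimed exponent: if you treat each modulus $d\le D$ individually, even with square-root cancellation in the Gauss sums, summing over $d$, $\ell\ll\sqrt X$ and the $O(1+dq/\sqrt X)$ dual frequencies leaves you with a bound of order at least $X D^{1/2}$, which is far weaker than $D^{1/4}X^{3/4}$. You correctly suspect that a further averaging over $d$ is needed and that one should not treat each $d$ in isolation, but your guess — the arithmetic large sieve inequality or a generic Cauchy–Schwarz over $d$ — is not the right averaging. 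The paper never converts the congruence into a Gauss sum at all. Instead it parametrizes the solutions by the roots $\nu$ of $F(1,\nu)\equiv 0\imod d$, so that after Poisson the phase is the \emph{linear} exponential $e(h\ell\nu\bar q/c)$. The decisive input is then Proposition~\ref{largesieve}, which is Proposition~3 of Balog–Blomer–Dartyge–Tenenbaum \cite{BBDT}: a large-sieve bound of the form $\sum_{D\le d\le 2D}\sum_{F(\nu,1)\equiv 0\imod d}|\sum_{n\le N}\alpha_n e(\nu n/d)|^2\ll (D+N)\sum_n|\alpha_n|^2$, exploiting the near-optimal spacing of the points $\nu/d$ on the circle as $d$ ranges over a dyadic window. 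This is the generalization (to arbitrary discriminant) of the corresponding spacing lemma for $\nu^2+1\equiv 0\imod d$ in Fouvry–Iwaniec, and there is no Weil cancellation anywhere in the argument.

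Two smaller structural points. First, the paper proves the smoothed estimate $\sum_{d\le D}|\mathcal{R}_d(X;w,\chi)|\ll q^3 D^{1/2}X^{3/2+\eps}/Y$ (Lemma~\ref{lemR3}) and obtains the exponent $D^{1/4}X^{3/4}$ only at the very end by balancing this against the un-smoothing error $\ll Y(\log X)^{A+3}$, i.e.\ by choosing the smoothing width $Y=D^{1/4}X^{3/4}$; the exponent does not come from a Cauchy–Schwarz that ``square-roots a bound quadratic in $D$'' as you surmise. Second, the deduction of Lemma~\ref{lemR1} from Lemma~\ref{lemR2} (removing $\gcd(N,P_F)=1$ by Möbius inversion over divisors of $P_F$) is the last, trivial step, whereas you place the Möbius inversion early; that reordering is harmless, but the chain \ref{lemR3}$\Rightarrow$\ref{lemR2}$\Rightarrow$\ref{lemR1} is where the smoothing optimization actually lives. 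To repair your proposal you would need to replace the Gauss-sum/Weil step entirely with the BBDT large sieve over roots of the quadratic congruence, and insert the smoothing-width optimization to convert $D^{1/2}$ into $D^{1/4}$.
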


To prove Lemma \ref{lemR1}, it is convenient to remove the restrictive condition $\gcd(N,P_F) = 1$. We let the scripted letters $\mathcal{A}, \mathcal{M}, \mathcal{R}$ to denote the analogous quantities $A,M,R$ which appeared in the previous section, but without the condition $\gcd(N,P_F) = 1$. We also set $\calM_d(X; \chi)=0$ if $\gcd(d, q)>1$. We then have the following analogue to Lemma \ref{lemR1}:

\begin{lemma}\label{lemR2}
For $1\leq D\leq X$ we have the bound
$$\mathcal{R}(X,D; \chi)\ll_\eps q^3D^{1/4}X^{3/4+\eps}.$$
\end{lemma}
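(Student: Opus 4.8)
\textbf{Proof proposal for Lemma \ref{lemR2}.}

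The plan is to reduce the estimation of $\mathcal{R}(X,D;\chi) = \sum_{d \le D} |\mathcal{R}_d(X;\chi)|$ to a counting problem over lattice points inside a dilated ellipse, following the classical Type I strategy of Fouvry--Iwaniec. First I would open up $\mathcal{A}_d(X;\chi)$: writing $N = F(\ell,m)$ and recalling $a_N = \sum_{F(\ell,m)=N,\ \gcd(\ell,\gamma m)=1}\lam(\ell)$, the condition $N \equiv 0 \imod d$ forces $\gcd(\ell, d) = 1$ and translates into a congruence $m \equiv \nu \ell \imod d$ for $\nu$ running over the $\rho(d)$ roots of $F(1,\nu) \equiv 0 \imod d$ (after inverting $\ell$ mod $d$, which is legitimate since $\gcd(\ell,d)=1$). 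So $\mathcal{A}_d(X;\chi) = \sum_{\nu \imod d,\ F(1,\nu)\equiv 0} \sum_{\ell}\lam(\ell)\chi(F(\ell,m)) \big[\sum_{m \equiv \nu\ell\, (d),\ F(\ell,m)\le X}1\big]$, with the constraint $\gcd(\ell,\gamma m)=1$; and $\mathcal{M}_d(X;\chi)$ is exactly the same expression with the inner count over $m$ in the residue class $\nu\ell \imod d$ replaced by its expected main term $\frac{1}{d}\cdot(\text{length of the }m\text{-interval})$. Hence $\mathcal{R}_d = \mathcal{A}_d - \mathcal{M}_d$ becomes, for each fixed $\ell$ and each root $\nu$, the discrepancy between the number of integers $m$ in an interval (of length $\ll \sqrt{X}$, determined by $F(\ell,m)\le X$) lying in a fixed arithmetic progression mod $d$, and the expected count. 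I should be careful that the $\gcd(\ell,\gamma m)=1$ and $\chi(F(\ell,m))$ factors can be handled: the coprimality by a further Möbius expansion over divisors of $\gamma m$ (only $\gamma$ contributes new moduli since $\gcd(\ell,m)=1$ is already forced together with the divisor sieve), and $\chi(F(\ell,m))$ is periodic in $m$ with period $q$, so it can be absorbed by splitting $m$ into residue classes mod $q$ (or mod $dq$), costing a factor of $q$.

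The core estimate is then the standard bound for the discrepancy of an interval of length $L \ll \sqrt{X}$ in a progression modulo $d$: this discrepancy is $\ll 1$ trivially, but summing $|\mathcal{R}_d|$ over $d \le D$ with $\rho(d)$ roots each and over $|\ell| \ll \sqrt{X}$ values of $\ell$ would only give $\ll D\sqrt{X}\sum_{d\le D}\rho(d)/d$-type bounds, which is too weak. Instead I would exploit cancellation over $d$ — or, more precisely, over the pairs $(d,\nu)$, i.e. over the roots $\nu/d$ viewed as points in $\R/\Z$ — using the Erdős--Turán inequality (equivalently, completing the sum and estimating the resulting exponential sums via the spacing of the roots $\nu/d$). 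This is precisely where the optimal spacing result of Balog--Blomer--Dartyge--Tenenbaum \cite{BBDT} (mentioned in the introduction as the replacement for the Fouvry--Iwaniec spacing lemma for $\nu^2+1\equiv 0$) enters: it guarantees that the fractions $\nu/d$ with $F(1,\nu)\equiv 0\imod d$, $d \le D$, are sufficiently well-spaced that the associated exponential sums $\sum_{d,\nu} e(h\nu/d)$ are small, which after summing the geometric-type series over the frequencies $h$ and over $\ell$ yields a bound of the shape $q^3 D^{1/4} X^{3/4+\eps}$. Roughly, one expects $\sum_{d \le D}|\mathcal{R}_d| \ll \sum_\ell \sum_{1\le |h| \le H} \frac{1}{|h|}\big|\sum_{d\le D}\sum_{\nu}e(h\nu\bar\ell/d)\big| + \text{(tail)}$, the spacing result bounds the $d,\nu$-sum by $\ll (D^{1/2} + (HD)^{\eps}\cdot D^{1/2+\eps})$-type quantities, and then summing over $\ell$ with $|\ell|\ll X^{1/2}$ and optimizing the completion length $H \asymp D/\sqrt{X}$ (or so) produces the exponents $1/4$ on $D$ and $3/4$ on $X$; the three powers of $q$ come from the $q$-dependence introduced when separating the character $\chi$ and the modulus interactions of $d$ with $q$.

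The main obstacle I anticipate is the spacing/exponential-sum step: getting the exponents exactly $(D^{1/4}, X^{3/4})$ rather than something lossy requires using the Balog--Blomer--Dartyge--Tenenbaum spacing estimate in a sufficiently strong, averaged form — not merely "well-spaced on average" but with control on the number of roots in short intervals uniformly — and then carefully balancing the completion parameter $H$ against the lengths $D$ and $\sqrt X$. A secondary technical nuisance is uniformity in $q$: since $\chi$ is a character mod $q$ and $d$ is restricted to $\gcd(d,q)=1$, one must track how $q$ couples to $d$ when completing sums modulo $dq$, and verify that the total $q$-cost is no worse than $q^3$; this is routine but must be done carefully so that the later application with $q \le (\log X)^N$ remains harmless. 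Once Lemma \ref{lemR2} is in hand, Lemma \ref{lemR1} follows by a short Möbius inversion removing the $\gcd(N,P_F)=1$ condition (at the cost of an absolutely bounded, $F$-dependent constant), as indicated in the text preceding the two lemmas.
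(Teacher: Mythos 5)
Your proposal is in the right spirit (Poisson/Erd\H{o}s--Tur\'an expansion, BBDT as the new ingredient, M\"obius to handle the coprimality, and a tail estimate), but it diverges from the paper's argument at the two places that actually produce the exponents, and at one of them I believe there is a genuine gap.

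First, the structure: the paper does not attack $\mathcal{R}(X,D;\chi)$ directly with Erd\H{o}s--Tur\'an. It first introduces a smooth cutoff $w$ supported on $[1,X]$ with transition length $Y$, proves a smoothed bound $\sum_{d\le D}|\mathcal{R}_d(X;w,\chi)| \ll q^3 D^{1/2}X^{3/2+\eps}/Y$ (Lemma \ref{lemR3}), and then deduces Lemma \ref{lemR2} by estimating the unsmoothing error $\sum_d|\mathcal{A}_d(X;\chi)-\mathcal{A}_d(X;w,\chi)| \ll Y\log^{A+3}X$ (via $a_N \ll \tau(N)(\log X)^A$ and a divisor-moment bound over the annulus $X-Y<F(\ell,m)\le X$) and \emph{balancing} by choosing $Y=D^{1/4}X^{3/4}$. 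That balance is where $D^{1/4}X^{3/4}$ comes from; your sketch tries to manufacture the same exponents by tuning a completion length $H\asymp D/\sqrt{X}$, which is not the mechanism at work and is not obviously consistent (for small $D$ your $H$ drops below $1$).

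Second, and this is the real gap: after Poisson the paper applies Cauchy--Schwarz so that the sum over roots $\nu/c$, $C<c\le 2C$, appears on the \emph{outside} and the pair $(h,\ell)$ is bundled into a single frequency $n\approx h\ell$ on the \emph{inside}; it then invokes BBDT's Proposition 3, which is a large-sieve (second-moment) inequality
\[
\sum_{C\le c\le 2C}\sum_{F(1,\nu)\equiv 0 \imod c}\Big|\sum_{n\le N}\alpha_n\, e\big(\tfrac{\nu n}{c}\big)\Big|^2 \ll_F (C+N)\sum_n|\alpha_n|^2.
\]
Your sketch instead fixes $(\ell,h)$ and tries to bound the inner sum $\sum_{d\le D}\sum_\nu e(h\nu\bar\ell/d)$ pointwise by something like $D^{1/2+\eps}$, attributing this to the BBDT spacing estimate. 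But well-spacing of the points $\nu/d$ does \emph{not} give a nontrivial pointwise bound on such an exponential sum over the roots; it only gives the bilinear (large-sieve) inequality above. Pointwise cancellation in $\sum_{d,\nu}e(h\nu/d)$ is exactly the Fouvry--Iwaniec Lemma that relies on the special reciprocity of $\nu^2+1\equiv 0 \pmod d$, and the present paper explicitly abandons that route for general $F$ (following \cite{Frie-I2} instead). So the step where you claim a $\ll D^{1/2+\eps}$ bound for the $(d,\nu)$-sum would need to be replaced by the Cauchy--Schwarz plus large-sieve step (exploiting the bilinear $h\ell$-averaging), and without that replacement the proposal does not close.

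Two smaller remarks: (i) as in the paper, you must also handle the moduli interaction with $q$ via CRT (writing $e(hk\bar c/q)e(h\ell\nu\bar q/c)$), and the factor $q^3$ arises from fixing $(h,\ell)$ modulo $q$ and summing over residues, not just from ``separating the character''; (ii) the $a$-sum from the M\"obius inversion removing $\gcd(\ell,m)=1$ leads to a modulus $c=d/\gcd(a,d)$ which must be tracked through the large-sieve application; your sketch glosses over this but the paper needs it to keep the dyadic bookkeeping consistent.
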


Lemma \ref{lemR1} will be a simple consequence of Lemma \ref{lemR2}. Furthermore Lemma \ref{lemR2} will be used to prove the corollaries from our main theorem. As in \cite{FI} we deduce Lemma \ref{lemR2} from a version where the $\mathcal{A}_d(X; \chi)$ are smoothed with an auxiliary weight function. Since we need to accommodate extra assumption $\gcd(\ell, \gamma m)=1$ it is convenient to adopt the approach from \cite{Frie-I2} instead. \\

Let $\sqrt{X}\le Y\le X$ be an additional parameter to be chosen later, and let $w: \R^+\rightarrow \R$ be a smooth function with the following properties:
\begin{equation}
\label{FF}
\left\{
\begin{array}{lr}
w(u)=0\hspace{12mm} \text{if }u\not\in[1, X],\vspace{2mm}\\
0\le w(u)\le 1\hspace{5.5mm}\text{if }u\in[1, X],\vspace{2mm}\\
w(u)=1\hspace{12mm}\text{if }Y\le u\le X-Y,\vspace{2mm}\\
w^{(j)}(u)\ll Y^{-j} \hspace{4mm}\text{for }j=1, 2.
\end{array}
\right.
\end{equation}
For $a, \ell\geq 1$ we define the function
\begin{equation}
\label{Fal}
F_{a, \ell}(z):=\int_\R w(F(a\ell, at))e(-zt)\d t.
\end{equation}
Let
\[
\mathcal{A}_d(X;w, \chi):=\sum_{N\equiv 0 \imod d}a_Nw(N)\chi(N).
\]
When $\gcd(d, q)=1$, define
\begin{equation}
\label{MdXw}
\mathcal{M}_d(X; w, \chi)=\frac{\rho(d)}{d}\sum_{\gcd(\ell, \gamma d)=1}\frac{\lam(\ell)\phi(\ell)}{\ell}\bigg(\frac{\sum_{k\imod q}\chi(F(\ell, k))}{q}\bigg)F_{1, \ell}(0)
\end{equation}
as well as the smoother remainder term
\[
\mathcal{R}_d(X;w, \chi):=\mathcal{A}_d(X;w, \chi)-\mathcal{M}_d(X;w, \chi).
\]
When $\gcd(d, q)>1$, they are both defined to be 0. We obtain the following lemma.

\begin{lemma}\label{lemR3}
Let $w$ and $\lam$ be as above and $1\leq D\leq X$. Then one has
$$\sum_{d\leq D}|\mathcal{R}_d(X;w, \chi)|\ll_\eps \frac{q^3D^{1/2}X^{3/2+\epsilon}}{Y}.$$
\end{lemma}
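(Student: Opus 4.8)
The goal is a level-of-distribution estimate for the smoothed remainder $\mathcal{R}_d(X;w,\chi)$ summed over $d \leq D$. The strategy, following \cite{Frie-I2} rather than the original \cite{FI}, is to expand $\mathcal{A}_d(X;w,\chi)$ using the definition of $a_N$, detect the divisibility $d \mid F(\ell,m)$ and the congruence mod $q$ via additive characters, isolate the main term as the ``zero frequency'' contribution, and bound the nonzero frequencies by Poisson summation in the $m$-variable (equivalently, via the Fourier transform $F_{a,\ell}$ introduced in \eqref{Fal}) combined with a bound for exponential sums involving the roots $\nu$ of $F(1,\nu)\equiv 0$ modulo $d$. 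Concretely, I would first write
\[
\mathcal{A}_d(X;w,\chi) = \sum_{\ell} \lam(\ell) \sum_{\substack{m \in \Z \\ \gcd(\ell,\gamma m) = 1 \\ d \mid F(\ell,m)}} w(F(\ell,m))\chi(F(\ell,m)).
\]
Since $\gcd(\ell,\gamma m)=1$ forces $\gcd(\ell,d)=1$, for each such $\ell$ the condition $d \mid F(\ell,m)$ pins $m$ down to $\rho(d)$ residue classes modulo $d$ (the roots $\nu$ of $F(\ell,\nu)\equiv 0 \imod d$, which for $\gcd(\ell,d)=1$ biject with roots of $F(1,\cdot)$). The character $\chi(F(\ell,m))$ depends only on $m \imod q$, so altogether $m$ runs over $\rho(d)$ classes mod $d$ times $q$ classes mod $q$, i.e.\ classes mod $dq$ (using $\gcd(d,q)=1$).

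**Main steps.** (1) For fixed $\ell$ and a fixed admissible class, apply Poisson summation / the definition of $F_{1,\ell}$ to the sum over $m$ in an arithmetic progression modulo $dq$; the frequency-zero term produces $\frac{1}{dq}\cdot(\text{number of admissible classes})\cdot F_{1,\ell}(0)$ times the appropriate character average, and one checks this reassembles exactly into $\mathcal{M}_d(X;w,\chi)$ as in \eqref{MdXw}, after handling the coprimality condition $\gcd(m,\ell)=1$ (restricted to the part dividing something bounded — here the $\gamma$ is why $P_F$ absorbs small primes) by Möbius inversion, which contributes the $\phi(\ell)/\ell$ factor. (2) The nonzero frequencies $h \neq 0$ give
\[
\mathcal{R}_d(X;w,\chi) \ll \sum_{\ell} |\lam(\ell)| \, \frac{1}{dq}\sum_{h \neq 0} \Bigl| \sum_{\nu} c_\nu \, e\Bigl(\tfrac{h\nu}{dq}\Bigr) \Bigr| \cdot |\widehat{(w\circ F_\ell)}(h/(dq))|,
\]
where the inner exponential sum over roots $\nu$ is controlled trivially by $\rho(d) \ll d^\eps$ (no cancellation among roots is needed here — only the spacing/counting), and the Fourier decay $|F_{1,\ell}(h/(dq))|$ is exploited using the bounds $w^{(j)} \ll Y^{-j}$: integrating by parts twice gives decay like $(dq/(h \cdot \text{derivative scale}))^2$, truncating the $h$-sum at $h \ll dq \cdot \sqrt{X}/Y$ up to negligible error. (3) Summing the truncated tail over $h$, over $\ell \ll \sqrt{X}$, and then over $d \leq D$, and inserting $q \le (\log X)^N$ where harmless, yields the claimed bound $q^3 D^{1/2} X^{3/2+\eps}/Y$; the powers of $q$ arise from the character sum $\sum_{k \imod q}\chi(F(\ell,k))$ and from the modulus $dq$ in the Poisson step, and $\eps$ absorbs the divisor-function and $\log$ losses.

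**The main obstacle.** The delicate point is step (1): verifying that the zero-frequency term, after Möbius-removing $\gcd(m,\ell)=1$ and after averaging $\chi(F(\ell,m))$ over the $q$ residues, matches $\mathcal{M}_d(X;w,\chi)$ \emph{on the nose}, including the factor $\bigl(\sum_{k \imod q}\chi(F(\ell,k))/q\bigr)$ and the replacement of the $\ell$-dependent root count by $\rho(d)$. This requires care because $\rho$ is the root-count of $F(1,\cdot)$ while the natural object here is the root-count of $F(\ell,\cdot)$; reconciling them uses $\gcd(\ell,d)=1$ and multiplicativity, exactly as in the passage from $A_d$ to $M_d$ in Section 3, and one must be sure the error introduced (e.g.\ from primes dividing $\gcd(\ell,\gamma)$) is swept into the $P_F$-free setting of Lemmas \ref{lemR1}–\ref{lemR2} rather than into the main term. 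The analytic estimates in steps (2)–(3) are routine given \eqref{FF}; the only other mild subtlety is keeping the dependence on $q$ explicit (rather than hiding it in $O_q(\cdot)$) so that the final bound is uniform for $q \le (\log X)^N$, which is what downstream Lemma \ref{Alem} needs.
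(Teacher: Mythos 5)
Your overall framework—expand $\mathcal{A}_d$ using the definition of $a_N$, Möbius out $\gcd(\ell,m)=1$, Poisson-sum in $m$ modulo $dq$, identify the $h=0$ term with $\mathcal{M}_d$, and bound the nonzero frequencies via the Fourier decay of $F_{1,\ell}$—matches the paper's general line (which itself follows \cite{Frie-I2}). But step (2) contains a genuine gap that makes the claimed $D^{1/2}$ unreachable.

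You write that "the inner exponential sum over roots $\nu$ is controlled trivially by $\rho(d) \ll d^\eps$ (no cancellation among roots is needed here — only the spacing/counting)." This is exactly where the argument fails. Taking absolute values over $\nu$ for each fixed $d$ and then summing over $d\le D$ gives a factor $\sum_{d\le D}\rho(d)\asymp D\log D$, not $D^{1/2}$. What is actually needed is a \emph{large sieve inequality} for the fractions $\nu/d$, where $\nu$ ranges over roots of $F(1,\nu)\equiv 0\imod d$ and $d$ over a dyadic range: applying Cauchy–Schwarz to $\sum_{c,\nu}|W_a(c,\nu)|$ and then the spacing estimate
\[
\sum_{D\le d\le 2D}\ \sum_{F(\nu,1)\equiv 0\imod d}\Bigl|\sum_{n\le N}\alpha_n\,e\Bigl(\frac{\nu n}{d}\Bigr)\Bigr|^2\ll_F (D+N)\sum_n|\alpha_n|^2
\]
(Proposition 3 of \cite{BBDT}, quoted in the paper as Proposition \ref{largesieve}). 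The point is that these roots are nearly optimally equidistributed in $\R/\Z$ as $d$ varies — this \emph{is} a cancellation statement, and it is precisely what converts the naive $D$ into $D^{1/2}(D+N)^{1/2}$ after Cauchy–Schwarz. The proof of the analogous spacing for $\nu^2+1\equiv0\imod d$ was a key technical achievement of \cite{FI}, and the paper's introduction explicitly flags that extending it to general $F$ (via \cite{BBDT}) is one of the two main obstacles being overcome; your sketch, by declaring the trivial bound sufficient, skips over it entirely.

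A secondary imprecision: the paper does not truncate the $h$-sum once and for all; it develops \emph{two} bounds for the dyadic pieces $V_a^{\pm}(C,H)$ — one via \eqref{Fm} directly, and one after integrating by parts twice in \eqref{Fm} to gain decay in $H$ — both going through the BBDT large sieve, and then optimizes over the crossover $H\asymp aC\sqrt{X}/Y$. Your truncation at $h\ll dq\sqrt{X}/Y$ is in the right spirit but elides this balancing. You also do not mention the auxiliary Möbius variable $a$ (with $c=d/\gcd(a,d)$, $b=\gcd(a,d)$), which the paper needs in order to carry the $\gcd(\ell,m)=1$ condition correctly through the Poisson step and still reassemble $\rho(d)/d\cdot\phi(\ell)/\ell$ in the main term; this is an omission rather than an error, but it is part of why verifying that the $h=0$ term equals $\mathcal{M}_d$ ``on the nose'' (your own flagged obstacle) requires care.
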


\begin{proof}
Note that
\[
\mathcal{A}_d(X; w, \chi)=\sum_{N\equiv0\imod d}\chi(N)w(N)\mathop{\sum\sum}_{\substack{F(\ell, m)=N\\\gcd(\ell, \gamma m)=1}}\lambda(\ell).
\]
We will assume $\gcd(d, q)=1$ throughout. The conditions $\gcd(\ell, \gamma m)=1$ and $F(\ell, m)\equiv 0\imod d$ imply $\gcd(\ell, d)=1$; hence $\mathcal{A}_d(X; w, \chi)$ can be rewritten as
\[
\mathcal{A}_d(X; w, \chi)=\sum_{\gcd(\ell, \gamma)=1}\lam(\ell)\sum_{\substack{\nu\imod d\\F(1, \nu)\equiv0\imod d}}\sum_{\substack{m\equiv \ell\nu\imod d\\\gcd(\ell, m)=1}}\chi(F(\ell, m))w(F(\ell, m)).
\]
By M\"obius inversion, we can trade the condition $\gcd(\ell, m)=1$ with
\[
\sum_{\gcd(a, \gamma)=1}\mu(a)\sum_{\gcd(\ell, \gamma)=1}\lam(a\ell)\sum_{\substack{\nu\imod d\\F(1, \nu)\equiv0\imod d}}\sum_{am\equiv a\ell\nu\imod d}\chi(F(a\ell, am))w(F(a\ell, am))
\]
and $a$ is bounded by $O(\sqrt{X})$. Now the innermost sum can be rewritten as
\[
\sum_{k\imod q}\chi(F(a\ell, ak))\sum_{\substack{am\equiv a\ell\nu\imod {d}\\ m\equiv k\imod q}}w(F(a\ell, am)).
\]
To simplify our notation, let $c=d/\gcd(a, d)$. Then the condition $am\equiv a\ell\nu\imod d$ is the same as $m\equiv \ell\nu\imod c$. By Poisson summation formula and Chinese Remainder Theorem,
\begin{equation}
\sum_{\substack{m\equiv \ell\nu\imod c\\ m\equiv k\imod q}}w(F(a\ell, am))=\frac{1}{cq}\sum_{h\in\mathbb{Z}}e\bigg(\frac{hk\overline{c}}{q}\bigg)e\bigg(\frac{h\ell\nu\overline{q}}{c}\bigg)F_{a, \ell}\bigg(\frac{h}{cq}\bigg)
\end{equation}
where $F_{a, \ell}(z)$ is defined in \eqref{Fal}. Therefore
\begin{equation}
\begin{split}
\mathcal{A}_d(X; w, \chi)~=~&\frac{1}{q}\sum_{\gcd(a, \gamma)=1}\frac{\mu(a)}{c}\sum_{\gcd(\ell, \gamma)=1}\lam(a\ell)\sum_{\substack{\nu\imod d\\F(1, \nu)\equiv0\imod d}}\sum_{k\imod q}\chi(F(a\ell, ak))\\
&\sum_{h\in\mathbb{Z}}e\bigg(\frac{hk\overline{c}}{q}\bigg)e\bigg(\frac{h\ell\nu\overline{q}}{c}\bigg)F_{a, \ell}\bigg(\frac{h}{cq}\bigg).
\end{split}
\end{equation}
Define $\mathcal{M}_d(X; w, \chi)$ to be the summand when $h=0$, i.e., the expression
\begin{equation}
\label{Mdf}
\frac{\rho(d)}{q}\sum_{\gcd(a, \gamma)=1}\frac{\mu(a)}{ac}\sum_{\gcd(\ell, \gamma)=1}\lam(a\ell)\sum_{k\imod q}\chi(F(a\ell, ak))\int^\infty_{-\infty}w(F(a\ell, t))\, dt
\end{equation}
and $\mathcal{R}_d(X; w, \chi)=\mathcal{A}_d(X; w, \chi)-\mathcal{M}_d(X; w, \chi)$. This is consistent with \eqref{MdXw} since
\[
\sum_{\substack{a|\ell\\\gcd(a, q)=1}}\frac{\mu(a)\gcd(a, d)}{a}=\prod_{p|\ell, p\nmid q}\bigg(1-\frac{\gcd(p, d)}{p}\bigg)
\]
equals to $0$ if $\gcd(\ell, d)>1$ and thus
\[
\mathcal{M}_d(X; w, \chi)=~\frac{\rho(d)}{dq}\sum_{\gcd(\ell, \gamma d)=1}\frac{\lam(\ell)\phi(\ell)}{\ell}\sum_{k\imod q}\chi(F(\ell, k))F_{1, \ell}(0).
\]
Note that for any integer $c$, we have
\begin{equation}
\label{Fm}
F_{a, \ell}\bigg(\frac{h}{cq}\bigg)=\frac{\sqrt{X}}{|h|}\int^{\infty}_{-\infty}w\bigg(a^2(\alpha \ell^2+\frac{\beta\ell u\sqrt{X}}{h}+\frac{\gamma Xu^2}{h^2})\bigg)e\bigg(-\frac{\sqrt{X}u}{cq}\bigg)\, du.
\end{equation}
We wish to sum $\calR_d(X; w, \chi)$ dyadically and hence we define
\[
\mathcal{R}(X, D; w, \chi)=\sum_{D<d\le 2D}|\mathcal{R}_d(X; w, \chi)|.
\]
Substituting $b=d/c=\gcd(a, d)$, each term in the above sum can be bounded by
\[
|\mathcal{R}_d(X; w, \chi)|~\le~ \frac{1}{dq}{\sum_a}^\flat\sum_{\substack{bc=d\\b|a}}\rho(b)b\sum_{\substack{\nu\imod c\\F(1, \nu)\equiv0\imod c}}\sum_{k\imod q}|W_a(c, \nu)|
\]
where
\[
W_a(c, \nu)=\sum_{h\neq0}\sum_{(\ell, \gamma d)=1}\\\lambda(a\ell)\chi(F(a\ell, ak))e\bigg(\frac{hk\overline{c}}{q}\bigg)e\bigg(\frac{h\ell\nu\overline{q}}{c}\bigg)F_{a, \ell}\bigg(\frac{h}{cq}\bigg).
\]
Hence
\begin{equation}
\label{RXDw}
\mathcal{R}(X, D; w, \chi)~\le~\frac{1}{Dq}\sum_{k\imod q}{\sum_a}^\flat\sum_{b|a}\rho(b)bV_a(D/b)
\end{equation}
where
\[
V_a(C)=\sum_{C<c\le 2C}\sum_{\substack{\nu\imod c\\F(1, \nu)\equiv0\imod c}}|W_a(c, \nu)|.
\]
By dyadic division,
\begin{equation}
\label{messup}
V_a(C)\le\sum_{H}\bigg(V^+_a(C, H)+V^-_a(C, H)\bigg)
\end{equation}
where $H$ is a power of 2,
\[
\begin{split}
V^+_a(C, H)=&\sum_{C<c\le2C}\sum_{\substack{\nu\imod c\\F(1, \nu)\equiv0\imod c}}\bigg|\sum_{H\le h<2H}\sum_{\gcd(\ell, \gamma)=1}\chi(F(a\ell, ak))\\
&\lambda(a\ell)e\bigg(\frac{hk\overline{c}}{q}\bigg)e\bigg(\frac{h\ell\nu\overline{q}}{c}\bigg)F_{a, \ell}\bigg(\frac{h}{cq}\bigg)\bigg|
\end{split}
\]
and $V^-_a(C, H)$ is defined similarly for those $h<0$.
We only present the argument for $V^+_a(C, H)$ below for simplicity. For a reduced residue class $t\imod q$, we define
\[
\alpha_{h,\ell, t}(u)=\chi(F(a\ell, ak))\lambda(a\ell)\frac{H}{h}w\bigg(a^2(\alpha \ell^2+\frac{\beta\ell u\sqrt{X}}{h}+\frac{\gamma Xu^2}{h^2})\bigg)e\bigg(\frac{hk\overline{t}}{q}\bigg).
\]
Then
\begin{equation}
\label{R}
\begin{split}
V^+_a(C, H)\ll&\frac{\sqrt{X}}{H}\int^{C_1H/a}_{-C_1H/a}\mathop{{\sum}^*}_{t\imod q}\sum_{C<c\le 2C}\sum_{\substack{\nu\imod c\\F(1, \nu)\equiv0\imod c}}\bigg|\sum_{H\le h<2H}\\
&\sum_{\gcd(\ell, \gamma)=1}\alpha_{h, \ell, t}(u)e\bigg(\frac{h\ell\nu\overline{q}}{c}\bigg)\bigg|\, du.
\end{split}
\end{equation}
The symbol $\sum^*$ means we are summing over reduced residue classes only. Next, we need to employ the Proposition 3 from \cite{BBDT}.
\begin{proposition}
\label{largesieve}
Let $F(x, y)=\alpha x^2+\beta xy+\gamma y^2\in\mathbb{Z}[x, y]$ be an arbitrary quadratic form whose discriminant is not a perfect square. For any sequence $\alpha_n$ of complex numbers, positive real numbers $D, N$, we have
\[
\sum_{D\le d\le 2D}\sum_{F(\nu, 1)\equiv0\imod d}\bigg|\sum_{n\le N}\alpha_{n}e\bigg(\frac{\nu n}{d}\bigg)\bigg|^2\ll_{F}(D+N)\sum_{n}|\alpha_n|^2.
\]
\end{proposition}
Notice that
\[
\sum_h \sum_\ell\alpha_{h, \ell, t}(u)e\bigg(\frac{h\ell\nu\overline{q}}{c}\bigg)=\sum_{0\le h_0, \ell_0< q}\sum_{\substack{h\equiv h_0\imod q\\\ell\equiv \ell_0\imod q}}\alpha_{h, \ell, t}(u)e\bigg(\frac{\nu n}{c}\bigg)e\bigg(\frac{\nu h_0\ell_0\overline{q}}{c}\bigg)
\]
where $n=(h\ell-h_0\ell_0)/q$. Hence for each fixed pair $0\le h_0, \ell_0<q$, we only need to estimate
\[
\sum_{C<c\le 2C}\sum_{F(1, \nu)\equiv0\imod c}\bigg|\sum_{n\le N}\alpha_{n}e\bigg(\frac{\nu n}{c}\bigg)\bigg|
\]
where
\[
\alpha_n=\mathop{\sum\sum}_{\substack{h\equiv h_0\imod q\\\ell\equiv \ell_0\imod q\\\gcd(\ell, \gamma)=1\\h\ell=nq+h_0\ell_0}}\alpha_{h, \ell, t}(u)
\]
and
\[
N=q+\frac{1}{q}(2H)\bigg(\frac{C_1\sqrt{X}}{a}\bigg)\ll\frac{H\sqrt{x}}{a}.
\]
Applying this inequality and Cauchy-Schwarz inequality on \eqref{R}, we deduce that
\[
V^+_a(C, H)\ll\frac{\sqrt{X}}{H}\frac{Hq}{a}\bigg(\bigg(C+\frac{H\sqrt{X}}{a}\bigg)E\bigg)^{1/2}(C\log C)^{1/2}
\]
where
\[
E=\sum_n\bigg(\mathop{\sum\sum}_{\substack{h\equiv h_0\imod q\\\ell\equiv \ell_0\imod q\\\gcd(\ell, \gamma)=1\\h\ell=nq+h_0\ell_0}}|\lambda(a\ell)|\bigg)^2\ll \frac{H\sqrt{X}}{a}\log^{2A+3}(HX).
\]
Hence we obtain
\begin{equation}
\label{R1}
V^+_a(C, H)\ll \frac{X^{3/4}C^{1/2}H^{1/2}}{a^{3/2}}\bigg(C+\frac{H\sqrt{X}}{a}\bigg)^{1/2}\log^{A+2}(HX).
\end{equation}
To develop a similar bound for large values of $H$, we apply integration by parts twice in \eqref{Fm} as in \cite{Frie-I2}, followed with the large sieve type estimate. We arrive at
\begin{equation}
\label{R2}
V^+_a(C, H)\ll \frac{X^{7/4}q^3C^{5/2}a^{1/2}}{H^{3/2}Y^2}\bigg(C+\frac{H\sqrt{X}}{a}\bigg)^{1/2}\log^{A+2}(HX).
\end{equation}
When $H\le aC\sqrt{X}Y^{-1}$, we use \eqref{R1} to deduce that
\[
V^+_a(C, H)\ll \frac{qX^{5/4}CH^{1/2}}{Y^{1/2}a^{3/2}}\log^{A+2}X
\]
and if $H>aC\sqrt{X}Y^{-1}$, we use \eqref{R2} to deduce that
\[
V^+_a(C, H)\ll \frac{q^3X^2C^{5/2}}{Y^2H}\log^{A+2}(HX).
\]
The same estimates hold for $V^-_a(C, H)$ as well. Therefore by \eqref{messup}
\begin{equation}
\label{RfJ}
V_a(C)\le\sum_{H}\bigg(V^+_a(C, H)+V^-_a(C, H)\bigg)\ll~\frac{q^3X^{3/2}C^{3/2}}{aY}\log^{A+2}X
\end{equation}
and by \eqref{RXDw}
\[
\mathcal{R}(X, D; w, \chi)~\le~\frac{q^3X^{3/2}\sqrt{D}\log^{A+2}X}{Y}\sum_{a\le\sqrt{X}}\frac{\tau(a)}{a}~\ll~\frac{q^3X^{3/2}\sqrt{D}\log^{A+4}X}{Y}.
\]
Finally
\[
\sum_{d\le D}|\mathcal{R}_d(X; w, \chi)|\ll\sum_D\mathcal{R}(X, D; w, \chi)\ll\frac{q^3D^{1/2}X^{3/2+\epsilon}}{Y}.
\]
\end{proof}

\begin{proof}[Proof of Lemma \ref{lemR2}]
To complete the proof of Lemma \ref{lemR2}, it suffices to show that the error we made when we replace $\mathcal{A}_d(X; \chi)$ with $\mathcal{A}_d(X; w, \chi)$ is negligible as well, i.e. both $|\mathcal{A}_d(X; \chi)-\mathcal{A}_d(X; w, \chi)|$ and $|\mathcal{M}_d(X; \chi)-\mathcal{M}_d(X; w, \chi)|$ are small. Note that
\[
\sum_{\substack{d\le D\\\gcd(d, q)=1}} |\mathcal{A}_d(X; \chi)-\mathcal{A}_d(X; w, \chi)|\ll (\log X)^A \mathop{\sum\sum}_{X-Y<F(\ell, m)\le X}\tau^2(F(\ell, m))\ll Y\log^{A+3} X.
\]
Here we have used the estimate $a_N\ll_C \tau(N)(\log X)^A$. Similarly,
\[
|\mathcal{M}_d(X; \chi)-\mathcal{M}_d(X; w, \chi)|\ll\frac{\rho(d)}{d}Y\log^{A+1}X.
\]
Summing over $d$ and choosing $Y=D^{1/4}X^{3/4}$, we have
\[
\mathcal{R}(X,D; \chi)=\sum_{d\leq D}|\mathcal{R}_d(X; \chi)|\ll q^3D^{1/4}X^{3/4+\eps}.
\]
\end{proof}
\begin{proof}[Proof of Lemma \ref{lemR1}] Follows from Lemma \ref{lemR2} and Mobius inversion. 
\end{proof}

Finally, we give proofs for the corollaries.

\begin{proof}[Proof of Corollaries \ref{cor1} and \ref{cor2}]
If $\lam$ is supported on primes, then starting from \eqref{apply} again, the right hand side becomes
\[
\begin{split}
\sum_{\substack{F(\ell, m)\leq X\\\gcd(\ell, \gamma m)=1\\\gcd(F(\ell, m), P_F)=1}}\lam(\ell)\chi(F(\ell, m))&=\sum_{e|P_F}\mu(e)\sum_{\substack{F(\ell, m)\leq X\\\gcd(\ell, \gamma m)=1\\F(\ell, m)\equiv0\imod e}}\lam(\ell)\chi(F(\ell, m))\\
&=\sum_{e|P_F}\mu(e)\mathcal{M}_e(X; \chi)+O\bigg(\sum_{e|P_F}|\mathcal{R}_e(X; \chi)|\bigg).
\end{split}
\]
Therefore it is also equal to
\[
\sum_{e|P_F}\mu(e)\frac{\rho(e)}{e}\mathop{\sum\sum}_{\substack{F(\ell, m)\le X\\\gcd(\ell, \gamma me)=1}}\lam(\ell)\chi(F(\ell, m))+O(\mathcal{R}(X, P_F; \chi)).
\]
The contribution when $\gcd(\ell, e)>1$ is negligible. Hence by Lemma \ref{lemR2}
\[
\begin{split}
\sum_{F(\ell, m)\leq X}\lambda(\ell)\chi(F(\ell, m))\Lam(F(\ell, m))=&H_{F, q}\prod_{p|P_F}\bigg(1-\frac{\rho(p)}{p}\bigg)\sum_{\substack{F(\ell, m)\leq X\\\gcd(\ell, \gamma m)=1}}\lam(\ell)\\
&+O_{A, B, C, F, N}(X(\log X)^{-B}).
\end{split}
\]
The contribution when $\gcd(\ell, \gamma m)>1$ is also negligible; therefore by orthogonality,
\[
\sum_{\substack{F(\ell,m)\leq X\\F(\ell, m)\equiv a\imod q}}\lam(\ell)\Lam(F(\ell,m)) =  \frac{H_q\phi(q)}{q}\sum_{\substack{F(\ell,m)\leq X\\F(\ell, m)\equiv a\imod q}}\lam(\ell)
\]
Corollary \ref{cor2} follows by taking $\lam(\ell)=\Lam(\ell)$ when $\ell\equiv b\imod q$. For Corollary \ref{cor1}, let $G(x, y)=mx+ny$ with $\gcd(m, n)=1$. Then there exist integers $s, t$ such that $ms-nt=1$. By a change of variables $u=-tx-sy, v=mx+ny$ we obtain
\[
F(x, y)=F(nu+sv, -mu-tv),
\]
which is a binary quadratic form in $u$ and $v$. The result follows from Corollary \ref{cor2} on the pair of forms $F(nu+sv, -mu-tv)$ and $v$ with $q=1$.
\end{proof}

\section{Bilinear sums}
\label{bi sums}

In this section we shall estimate $B(X;Y,Z; \chi)$ given in (\ref{bxyz}) by proving Lemma \ref{Blem}. For reasons of exposition, we first work under the assumption that $|\lam(\ell)|\le1$ for all $\ell\in\N$. As we save an arbitrary power of $\log X$ in our arguments, the general case can then be obtained by changing the parameter $A$. We proceed as in \cite{FI}. First put $\theta = (\log X)^{-A}$ and write
\begin{equation} \label{BMN} \calB(M,N) = \sum_{\substack{M < m \leq 2M\\\gcd(m, P_F)=1}} \left \lvert \sum_{\substack{N < n < N'\\\gcd(n, P_F)=1}} \mu(n)\chi(n) a_{mn} \right \rvert,\end{equation}
where $N' = e^\theta N$. Using these sums for $M = 2^j Z$ and $N = e^{k \theta} Y$ we get
\begin{equation} |B(X;Y,Z;\chi)| \leq (\log X) \sum_{\substack{\theta x < MN < X \\ M \geq Z, N \geq Y}} \calB(M,N) + O \left(\theta X (\log X)^2\right)
\end{equation}
where the error term $O(\theta X (\log X)^2)$ represents a trivial bound for the contribution of $\mu(b) \chi(bd)a_{bd}$ with $bd \leq 2 \theta X$ or $e^{-2\theta} X < bd \leq X$, which terms are not covered exactly. As in \cite{FI}, we need to show that each short sum $\calB(M,N)$ satisfies
\begin{equation} \calB(M,N) \ll \theta^2 X (\log X)^2.
\end{equation}

Let $\calB_d(M,N)$ denote the sum (\ref{BMN}) restricted to $\gcd(m,n) = d$. We have
\[\calB(M,N) \leq \sum_{d < \theta^{-1}} \calB_d(M,N) + O\left(\theta^2 X \right)\]
where the error term $O(\theta^2 X)$ represents a trivial bound for the contribution of $\mu(n)\chi(mn) a_{mn}$ with $\gcd(m,n) \geq \theta^{-1}$. Note that
\[\calB_d(M,N) \leq \calB_1(dM, N/d).\]
Therefore, the proof of Lemma \ref{Blem} is reduced to showing the estimate
\begin{equation} \calB_1(M,N) \ll \theta^3 X(\log X)^2
\end{equation}
holds for any $M,N$ with $M \geq Z, N \geq \theta Y$ and $\theta X < MN < X$. \\

Define $\alpha(n)=\mu(n)\chi(n)$. When applying Proposition \ref{bqf2} to decompose $a_{mn}$ into solutions of
\[
f(u, v)=m, \hspace{5mm}f^*(w, z)=n,
\]
in fact later in \eqref{ggstar} we will decompose the solutions of $f^*(w, z)=n$ again using the same proposition. We construct $\calS_{f^*}$ in the same way we construct $\mathcal{S}_F$ by taking
\[
\calS_{f^*}=\calS_{f^*}\bigg(\alpha\prod_{f\in\calS_F}f(1, 0)\bigg)
\]
and let $g(x, y)=dx^2+exy+fy^2\in \calS_{f^*}$. We pick an integer $B$ such that
\begin{enumerate}
\item $B\equiv b\imod {2a}$ for all $ax^2+bxy+cy^2\in \mathcal{S}_F$;
\item $B\equiv e\imod {2d}$ for all $dx^2+exy+fy^2\in \mathcal{S}_{f^*}$;
\item $B\equiv \beta\imod {2\alpha}$; and
\item $B^2+\Delta\equiv0\imod {4ad\alpha}$ for all $ax^2+bxy+cy^2\in \mathcal{S}_F$ and $dx^2+exy+fy^2\in \mathcal{S}_{f^*}$.
\end{enumerate}
Such $B$ always exist since the coefficients of $x^2$ of elements in $\calS_F$ or $\calS_{f^*}$ are distinct primes. So $B$ depends only on $F$ and the choices of $\mathcal{S}_F$ and $\calS_{f^*}$; and hence depends only on $F$. In the definition of $P_F$ we take $C_F$ large enough so that
\[
Q_F\prod_{f\in\calS_F}Q_{f^*}\bigg|\prod_{p\le C_F}p=P_F.
\]
By Proposition \ref{bqf2}, we can bound $\calB_1(M, N)$ by
\begin{equation} \label{BL1} \calB_1(M,N) \leq \sum_{f\in\mathcal{S}_F} \sum_{\substack{M < f(u, v) \leq 2M\\\gcd(f(u, v), P_F)=1\\\gcd(u, v)=1}} \bigg|\sum_{\substack{N < f^*(w, z) \leq N' \\ \gcd(f(u, v)P_F, f^*(w, z)) = 1\\\gcd(w, z)=1 }} \alpha(f^*(w, z)) \lam(\calQ_F(u, v; w, z)) \bigg|.\end{equation}
Proceeding with the argument to relax the condition that $\gcd(f(u, v), f^*(w, z)) = 1$, we use the familiar arithmetic identity
\[\sum_{r | \gcd(m,n)} \mu(r) = \begin{cases} 1 & \text{if } \gcd(m,n) = 1 \\ 0 & \text{otherwise}.\end{cases}\]
Since $n$ is squarefree, by Proposition \ref{bqf1} we can decompose $f^*(w, z)=n$ as
\begin{equation}\label{ggstar}
g(u_0,v_0) = r, g^*(w_0, z_0) = \frac{n}{r}
\end{equation}
for some $g\in\mathcal{S}_{f^*}$ and we have the relations
\begin{equation}\label{pair}
\begin{split}
\bigg(du_0+\frac{e+B}{2}v_0\bigg)w_0+\bigg(\frac{(B+e)B+\Delta-Be}{4da\alpha}v_0\bigg)z_0&=w,\\
-a\alpha v_0w_0+\bigg(u_0-\frac{B-e}{2d}v_0\bigg)z_0&=z.
\end{split}
\end{equation}
We then see that the inner sum of (\ref{BL1}) becomes 
\[\sum_{g\in\mathcal{S}_{f^*}} \sum_{g(u_0, v_0) = r} \mu(r) \sum_{\substack{N < r g^*(w_0, z_0) < N'\\\gcd(g^*(w_0, z_0), P_F)=1\\\gcd(w_0, z_0)=1}} \alpha(r g^*(w_0, z_0)) \lam(\calQ_F(u, v; w, z)) .    \]
Now it suffices to evaluate a sum of the shape 
\begin{equation} \label{BL2}
{\sum_{r}}^\flat \sum_{g(u_0, v_0) = r} \sum_{\substack{M < f(u, v) \leq 2M\\\gcd(f(u, v), P_F)=1\\\gcd(u, v)=1\\r|f(u, v)}}\bigg|  \sum_{\substack{N < r g^*(w_0, z_0) < N'\\\gcd(g^*(w_0, z_0), P_F)=1\\\gcd(w_0, z_0)=1}} \alpha(r g^*(w_0, z_0)) \lam(\calQ_F(u, v; w, z)) \bigg|
\end{equation}
where $w$ and $z$ are determined by \eqref{pair}. Now note that
\[
f(u, v)g(u_0, v_0)=adP^2+BPQ+\frac{B^2+\Delta}{4ad}Q^2:=h(P, Q)
\]
where
\begin{equation}
\label{pair2}
\begin{split}
P&=\bigg(u-\frac{B-b}{2a}v\bigg)u_0-\bigg(\frac{B-e}{2d}u+\frac{(b+e)B+\Delta-be}{4ad}v\bigg)v_0,\\
Q&=dvu_0+\bigg(au+\frac{b+e}{2}v\bigg)v_0.
\end{split}
\end{equation}
When $(P, Q)$ and $(u_0, v_0)$ are fixed, there is at most one pair $(u, v)$ such that \eqref{pair2} holds. Also with \eqref{pair} and \eqref{pair2} we deduce that
\[
-\alpha vw+\bigg(u-\frac{B-b}{2a}v\bigg)z = z_0P-\alpha w_0Q.
\]
By \eqref{uBv} we have $\gcd(P, \alpha)=1$. Therefore the sum \eqref{BL2} is less than
\[
{\sum_{r}}^\flat \rho(r)\sum_{\substack{\substack{M < h(P, Q) \leq 2M\\\gcd(P, \alpha)=1}\\\gcd(h(P, Q), P_F)=1\\r^2|h(P, Q)}}\bigg| \sum_{\substack{N < r g^*(w_0, z_0) < N'\\\gcd(g^*(w_0, z_0), P_F)=1\\\gcd(w_0, z_0)=1}} \alpha(r g^*(w_0, z_0))\lambda(z_0P-\alpha w_0Q) \bigg|.
\]
Estimating trivially we find that the terms with $r \geq \theta^{-2}$, where we take $\theta = (\log x)^{-A}$ for some large positive number $A$ as in \cite{FI}, contribute
\[O \left(\theta MN \sum_{r >\theta^{-2}} \rho(r)^2 r^{-2} \right) = O \left(\theta^3 x (\log x)^2\right).\]
In the remaining terms we ignore the conditions $r^2 | h(P, Q)$, $\gcd(h(P, Q), P_F)=1$ and obtain 
\begin{align*} \calB_1(M,N) & \leq \sum_{r < \theta^{-2}} \rho(r) \sum_{\substack{rM < h(P, Q) \leq 2rM\\\gcd(P, \alpha)=1}} \bigg|  \sum_{\substack{N < r g^*(w_0, z_0) < N'\\\gcd(g^*(w_0, z_0), P_F)=1\\\gcd(w_0, z_0)=1}} \alpha(r g^*(w_0, z_0))\lambda(z_0P-\alpha w_0Q))\bigg| \\
& + O \left(\theta^3 x (\log x)^2\right).
\end{align*}

Put
\[\calC_r(M,N) = \sum_{\substack{M < h(P, Q) \leq 2M\\\gcd(P, \alpha)=1}} \bigg|  \sum_{\substack{N < r g^*(w_0, z_0) < N'\\\gcd(g^*(w_0, z_0), P_F)=1\\\gcd(w_0, z_0)=1}} \alpha(r g^*(w_0, z_0))\lambda(z_0P-\alpha w_0Q) \bigg|.\]
We then write
\[\calC_{cr} (M,N) = \sideset{}{^\ast} \sum_{\substack{M < h(P, Q) \leq 2M\\\gcd(P, \alpha)=1}} \bigg|\sum_{\substack{N < r g^*(w_0, z_0) < N'\\\gcd(g^*(w_0, z_0), P_F)=1\\\gcd(w_0, z_0)=1}} \alpha(r g^*(w_0, z_0))\lambda(z_0P-\alpha w_0Q) \bigg|,\]
where the asterisk in the sum means that the sum is over primitive pairs. By \cite{FI} it then suffices to give a bound of the shape
\[\calC_{cr}(M,N) \ll \theta^5 MN\]
for every $c,r, M,N$ with $c < \theta^{-4}, r < \theta^{-2}, M\ge \theta^4Z, N > \theta^3 Y,$ and $\theta^5 X < MN < X$. Our assumptions in Lemma \ref{Blem} guarantee that $M, N$ satisfy $N^\eps<M<N^{1-\eps}$ for some small $\eps>0$. This assumption will be used in \eqref{Xi} and \eqref{pow} and we will give a bound of the form
\begin{equation} \label{req bd} \calC_{cr}(M,N) \ll MN (\log N)^{-j}.\end{equation}

Let $A=\sqrt{N/r}, B=\sqrt{M}$ and $\alpha(u, v)=\alpha(rg^*(u, v))$. Then $\alpha(u, v)$ is supported in the annulus $A^2< g^*(u,v) \leq 4A^2$. By applying the Cauchy-Schwarz inequality, we obtain
\begin{align*} |\calB(M,N)| & \leq \sum_{\ell} |\lambda(\ell)| \sideset{}{^\ast} \sum_{\substack{M<h(w, z)<2M\\\gcd(w, \alpha)=1}}\bigg| \sum_{vw-\alpha uz = \ell} \alpha(u, v) \bigg| \\
& \leq A^{1/2}B^{3/2} \calD(\alpha)^{1/2},
\end{align*}
where 
\begin{equation}\label{calDa}
\calD(\alpha) = \sideset{}{^\ast} \sum_{\substack{(w, z)\in\mathbb{Z}^2\\\gcd(w, \alpha)=1}} \psi (w, z) \sum_{\ell} \bigg| \sum_{Q(u, v; w, z) = \ell} \alpha(u, v) \bigg|^2
\end{equation}
and
\[
Q(u, v; w, z)=vw-\alpha uz.
\]
Here $\psi(w, z)$ can be any non-negative function with $\psi(w, z) \geq 1$ if $B^2 \leq h(w, z) \leq 4B^2$. We do not need to be specific at this point; nevertheless it will be convenient to assume that $\psi(w, z)$ takes the form $\Psi(h(w, z))$, where
\[0 \leq \Psi(t) \leq 1, \Psi(t) = 1 \text{ if } B^2 \leq t \leq 4B^2, \]
\[\operatorname{supp} \Psi \subset [B^2/4, 9B^2], \Psi^{(j)} \ll B^{-2j}.\]
Our desired estimate for $\calD(\alpha)$ is $A^3B$ with a saving of an arbitrary power of $\log N$. Since $\ell$ runs over all integers (without any restriction), after squaring we obtain
\begin{equation} \label{Da} \calD(\alpha) = \sideset{}{^\ast} \sum_{\substack{(w, z)\in\mathbb{Z}^2\\\gcd(w, \alpha)=1}} \psi(w, z) \sum_{Q(u, v; w, z) = 0} (\alpha \ast \alpha)(u, v),
\end{equation}
where
\[(\alpha \ast \alpha)(u, v) = \sum_{(s_1, t_1) - (s_2, t_2) = (u, v)} \alpha(s_1, t_1) \overline{\alpha}(s_2, t_2). \]
This equality follows because $Q(u, v; w, z)$ is a bilinear form. Note that
\[(\alpha \ast \alpha)(0,0) \ll A^2.\]
The orthogonality relation $Q(u, v; w, z) = 0$ in (\ref{Da}) is equivalent to
\[(u, v) = (cw, c\alpha z) \]
for some rational integer $c \in \Z$ since $ \gcd(w, \alpha z) = 1$.
It thus follows that 
\begin{equation}\label{D}
\begin{split}
\calD(\alpha) = &\sum_{c \in \Z} \sideset{}{^\ast} \sum_{\substack{(w, z)\in\mathbb{Z}^2\\\gcd(w, \alpha)=1}} \psi(w, z) (\alpha \ast \alpha)(cw, c\alpha z)\\
 = &\calD_0(\alpha) + 2 \calD^\ast(\alpha),
\end{split}
\end{equation}
say, where $\calD_0(\alpha)$ denotes the contribution of $c = 0$ and $\calD^\ast(\alpha)$ that of all $|c| > 0$. Thus
\begin{equation}\label{d0} \calD_0(\alpha) = \lVert \alpha \rVert^2 \sideset{}{^\ast} \sum_{\substack{(w, z)\in\mathbb{Z}^2\\\gcd(w, \alpha)=1}} \psi(w, z) \ll A^2B^2
\end{equation} 
and
\begin{equation} \calD^\ast(\alpha) = \sum_{(s, t) \ne (0, 0)} \psi\left(\frac{s}{ \gcd(s, t)}, \frac{t}{\gcd(s,  t)} \right) (\alpha \ast \alpha)(s, \alpha t).
\end{equation}
We trade the primitivity condition for congruence conditions by means of M{\"o}bius inversion, getting
\begin{equation} \calD^\ast(\alpha) = \sum_{b,c > 0} \mu(b) \calD(\alpha; b, c)
\end{equation} 
where
\begin{equation} \calD(\alpha; b, c) = \sum_{ (s, t) \equiv (0,0) \imod{bc}} \psi \bigg(\frac{s}{ c}, \frac{t}{c}\bigg) (\alpha \ast \alpha)(s, \alpha t).
\end{equation}
Note that $g^*(s, \alpha t) \leq 2A$ (from the support of $\alpha$) and $cB/2 < g^*(s, \alpha t) < 3cB$ (from the support of $\psi$). Observe that these imply that $c < 4AB^{-1}$, otherwise $\calD(\alpha; b, c)$ is zero. Let $\Xi$ be a parameter such that
\begin{equation}\label{Xi} 1 \leq \Xi \leq 4AB^{-1} = C,
\end{equation}
say. We will take $\Xi$ to be a power of $\log N$ at the end and this explains why $N$ needs to be larger than $M$, say $N^{1-\epsilon}>M$. By the trivial bound
\[\calD(\alpha; b, c) \ll A^2B^2 b^{-2} \]
we see that the terms with $b \geq \Xi$ or $c \leq C\Xi^{-1}$ contribute at most $O(A^3B \Xi^{-1})$ to $\calD^\ast(\alpha)$ so 
\begin{equation}\label{Dast}
\calD^\ast(\alpha) = \sum_{b \leq \Xi} \mu(b) \sum_{C\Xi^{-1} < c < C} \calD(\alpha; b, c) + O \left(A^3B\Xi^{-1}\right).
\end{equation}
If $h(w, z)=Dw^2+Ewz+Fz^2$ with $D>0$, then
\[
\begin{split}
\psi(w, z)&=\Psi\bigg(\frac{(2Dw+Ez)^2+(4DF-E^2)z^2}{4D}\bigg)\\
&=\Psi\bigg(\bigg(\frac{2Dw+Ez}{2\sqrt{D}}\bigg)^2+\bigg(\frac{\sqrt{|\Delta|}z}{2\sqrt{D}}\bigg)^2\bigg).
\end{split}
\]
Then we can define
\[
\psi_0(w, z)=\psi\bigg(\frac{\sqrt{|\Delta|}w-Ez}{\sqrt{D|\Delta|}}, \frac{2\sqrt{D}z}{\sqrt{\Delta}}\bigg).
\]
Then
\[
\psi(w, z)=\psi_0\bigg(\frac{2Dw+Ez}{2\sqrt{D}}, \frac{\sqrt{|\Delta|}z}{2\sqrt{D}}\bigg)\hspace{5mm}\text{and}\hspace{5mm}\psi_0(w, z)=\Psi(w^2+z^2).
\]
Hence if we define
\[
\begin{split}
\phi(x, y)&:=\int^\infty_{-\infty}\int^\infty_{-\infty}\psi_0(w, z)e(-(xw+yz))\, dw\, dz\\
&=\int^\infty_{-\infty}\int^\infty_{-\infty}\psi\bigg(\frac{2Dw+Ez}{2\sqrt{D}}, \frac{\sqrt{|\Delta|}z}{2\sqrt{D}}\bigg)e(-(xw+yz))\, dw\, dz,	\\
\end{split}
\]
$\phi(x, y)$ will depend only on $x^2+y^2$ and we can set $\phi(x, y)=\Phi(x^2+y^2)$. By inversion and a change of variables we obtain
\begin{equation}\label{integral}
\psi\bigg(\frac{w}{c}, \frac{z}{c}\bigg)=\frac{2c^2}{\sqrt{|\Delta|}}\int^\infty_{-\infty}\int^\infty_{-\infty}\Phi\bigg(\frac{4c^2h(-y, x)}{|\Delta|}\bigg)e(xw+yz)\, dx\, dy.
\end{equation}
Therefore
\[
\calD(\alpha; b, c) = \frac{2c^2}{\sqrt{|\Delta|}}\int^\infty_{-\infty}\int^\infty_{-\infty}\Phi\bigg(\frac{4c^2h(-y, x)}{|\Delta|}\bigg)S_{bc}(x, y)\, dx\, dy
\]
where
\[
\begin{split}
S_{d}(x, y) &= \sum_{ (s, t) \equiv (0,0) \imod d}(\alpha \ast \alpha)(s, \alpha t)e(xs+yt)\\
&= \mathop{\sum\sum}_{\substack{s_1\equiv s_2\imod d\\t_1\equiv t_2\imod {\lcm(\alpha, d)}}}\alpha(s_1, t_1)\overline{\alpha}(s_2, t_2)e\bigg(xs_1+\frac{yt_1}{\alpha}\bigg)e\bigg(-xs_2-\frac{yt_2}{\alpha}\bigg)\\
&= \sum_{\alpha_1\imod \alpha}\sum_{d_1, d_2\imod d}\bigg|\sum_{\substack{s\equiv d_1\imod d\\ t\equiv d_2\imod d\\t\equiv \alpha_1\imod \alpha}}\alpha(s, t)e\bigg(xs+\frac{yt}{\alpha}\bigg)\bigg|^2.
\end{split}
\]
By (9.14) of \cite{FI},
\[
c^2\Phi\bigg(\frac{4c^2h(-y, x)}{|\Delta|}\bigg) \ll \frac{c^2B^2}{(1+c^2B^2h(-y, x))^{3/2}} \ll \frac{A^2\Xi}{(1+h(-y, x)A^2)^{3/2}}.
\]
Hence
\[
\calD(\alpha; b, c) \ll \Xi A^2\int^\infty_{-\infty}\int^\infty_{-\infty}H(x, y)S_{bc}(x, y)\, dx\, dy
\]
where
\[
H(x, y) = \frac{1}{(1+h(-y, x)A^2)^{3/2}}.
\]
By grouping $d=bc$ and setting $D=C\Xi$, we obtain from \eqref{Dast}
\begin{equation}\label{dstar}
\calD^*(\alpha)\ll M\Xi^3\int^\infty_{-\infty}\int^\infty_{-\infty}H(x, y)\bigg(\sum_{d\le D}d^2S_{d}(x, y)\bigg)\, dx\, dy+A^3B\Xi^{-1}.
\end{equation}
To account for the large $d$ appearing in the above sum, we need to invoke Proposition 15 of \cite{FI}.
\begin{proposition}
Suppose $A\ge D\ge 1$. Let $f$ be a complex-valued function on $\Z[i]$ supported on the disc $|z|\le A$. Define
\[
S_f(D)=\sum_{d\le D}d^2\sum_{\delta\imod d}\bigg|\sum_{z\equiv\delta\imod d}f(z)\bigg|^2.
\]
Then for any $G\ge1$ we have
\begin{equation}
\label{pow}
S_f(D)\le 2DS_f(G)+O_\eps(AD(D^{1+\epsilon}+AG^{\epsilon-1})||f||^2).
\end{equation}
\end{proposition}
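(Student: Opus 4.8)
The plan is to prove this inequality by the self-contained argument that Fouvry--Iwaniec use for the analogous statement over $\Z[i]$; since the setting here is already phrased over the Gaussian integers, only bookkeeping needs to be carried along. The first step is to open the square in the definition of $S_f$ and perform the sum over $\delta \bmod d$, which gives
\[
S_f(D) = \sum_{d \le D} d^2 \sum_{\substack{z_1, z_2 \in \Z[i]\\ z_1 \equiv z_2 \bmod d}} f(z_1)\overline{f(z_2)},
\]
where, for a rational integer $d$, the relation $z_1 \equiv z_2 \bmod d$ means $d \mid \Re(z_1-z_2)$ and $d \mid \Im(z_1-z_2)$, i.e. $d \mid g(v)$ with $v = z_1-z_2$ and $g(v) = \gcd(\Re v,\Im v)$ for $v \ne 0$. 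Since $f$ is supported on $|z|\le A$, the diagonal $z_1 = z_2$ contributes $\|f\|^2\sum_{d\le D}d^2 \ll D^3\|f\|^2 \le AD^2\|f\|^2$, which is admissible because $D \le A$; so it remains to bound the off-diagonal sum
\[
\Sigma(D) := \sum_{v\ne 0} T(v)\sum_{\substack{d\le D\\ d\mid g(v)}} d^2, \qquad T(v) := \sum_z f(z+v)\overline{f(z)},
\]
where $T(-v) = \overline{T(v)}$ (so $\Sigma(D)$ is real), $|T(v)|\le\|f\|^2$, and $T$ is supported on $|v|\le 2A$.

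Next I would split the inner sum over $d$ at the threshold $G$. The part $d \le G$ is precisely $\Sigma(G) = S_f(G) - \|f\|^2\sum_{d\le G}d^2 \le S_f(G)$, which is swallowed by the term $2D\,S_f(G)$ since $D\ge 1$. The genuine content lies in the range $G < d \le D$, which I would handle by the reflection $d \leftrightarrow g(v)/d$: writing $g = g(v)$, a divisor $d$ of $g$ with $G < d \le D$ forces $g > G$ and yields the identity $\sum_{G<d\le D,\, d\mid g} d^2 = g^2\sum_{e\mid g,\, g/D\le e < g/G} e^{-2}$. Only those $v$ for which $z_1-z_2$ is highly divisible — $g(v) > G$, so that $v = g(v)\,w$ with $w$ primitive and $|w|\le 2A/g(v)$ small — can contribute; combining this sparsity with the elementary lattice-point count $\#\{z\equiv\delta\bmod e : |z|\le A\}\ll (A/e)^2+1$ (hence $\sum_{v:\,e\mid v}|T(v)| \ll ((A/e)^2+1)\|f\|^2$), the decay $\sum_{e\ge X}e^{-2}\ll X^{-1}$, and the divisor bound $\tau(n)\ll_\varepsilon n^\varepsilon$, the off-diagonal sum should be bounded by $2D\,S_f(G) + O_\varepsilon\!\big(AD(D^{1+\varepsilon}+AG^{\varepsilon-1})\|f\|^2\big)$. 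Adding back the diagonal then gives the proposition.

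The main obstacle is exactly the long range $G < d \le D$. A purely trivial treatment — estimating $|T(v)|\le\|f\|^2$, summing the weight by $\sum_{e\ge X}e^{-2}\ll X^{-1}$, and counting divisors — loses a factor of roughly $D$ against the claimed error, so one cannot argue term by term. One must keep the weight $\sum_d d^2$ attached to $g(v)$ through the reflection identity, and, for the genuinely "aligned" pairs (in particular those with $g(v)$ prime and larger than $G$, where no nontrivial divisor $\le G$ is available), charge the cost to $S_f(G)$ itself rather than estimate it in isolation, exploiting that each such pair is already recorded — via the divisors of $g(v)$ not exceeding $G$ — among the terms assembling $S_f(G)$. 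Making these contributions fit under $2D\,S_f(G)$ with the stated exponents is the delicate step; everything else is routine manipulation of lattice-point counts and divisor sums.
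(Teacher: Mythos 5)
The paper you are working from does not actually prove this statement; it is quoted verbatim as Proposition~15 of Fouvry--Iwaniec \cite{FI} and invoked directly in \eqref{dSd}. So there is no ``paper's own proof'' to compare against, and your attempt has to be judged on whether it would reproduce the argument of \cite{FI}. Your opening moves are the right ones: expanding the square, isolating the diagonal $v=0$ (which indeed contributes $\ll D^3\|f\|^2 \le AD^{2}\|f\|^2$), writing the off-diagonal as $\sum_{v\ne 0} T(v)\sum_{d\mid g(v),\,d\le D} d^2$, and peeling off the range $d\le G$, which is trivially $\le S_f(G)$. Up to this point everything checks out.

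The gap is exactly where you flag it: the range $G<d\le D$. You propose the reflection $d\leftrightarrow g(v)/d$ together with trivial lattice-point counts and $|T(v)|\le\|f\|^2$, and you correctly note that this loses a power and cannot close. But the remedy you then gesture at --- ``charge the cost to $S_f(G)$ itself \ldots\ exploiting that each such pair is already recorded \ldots\ among the terms assembling $S_f(G)$'' --- does not constitute an argument, and as phrased it cannot work. The quantity $S_f(G)$ is a sum of nonnegative squares, and once the square is opened the individual pair contributions $f(z_1)\overline{f(z_2)}$ are signed and cancel inside it; you cannot ``retrieve'' the contribution of a single aligned pair $(z_1,z_2)$ from $S_f(G)$ and majorize an unrelated signed sum by it term by term. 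The factor $2D$ in front of $S_f(G)$ has to be produced by a global, positivity-preserving rearrangement, not by pointwise charging. The natural route is to pass to the dual side via Parseval: writing $T_d = \sum_{\omega \bmod d}\,|\tilde f(\omega/d)|^2$ and regrouping the Farey fractions $\omega/d$ by their reduced denominator $q$, one gets $S_f(D)=\sum_{q\le D}\lfloor D/q\rfloor V_q$ with $V_q\ge 0$; since $\lfloor D/q\rfloor \le D \le 2D\lfloor G/q\rfloor$ for $q\le G$, the entire range $q\le G$ is absorbed by $2D\,S_f(G)$ using only the positivity of $V_q$, and the remaining range $G<q\le D$ is then estimated by the two-dimensional large sieve together with $V_q\ll (A^2+q^2)\|f\|^2$, producing the stated error terms $AD^{2+\eps}$ and $A^2DG^{\eps-1}$. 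This regrouping-by-reduced-denominator step, with its use of nonnegativity of the dual quantity $V_q$, is the missing idea; your physical-space reflection does not supply a substitute for it, and you acknowledge as much in your last sentence.
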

For $m+ni\in\Z[i]$, we take $f(m+ni)=\alpha(m, n)e(xm+yn/\alpha)$ if $n\equiv \alpha_1\imod \alpha$. Thus
\begin{equation}\label{dSd}
\sum_{d\le D}d^2S_d(x, y)\le 2\alpha D\sum_{d\le G}d^2S_d(x, y)+O(A^5B^{-1}\Xi G^{\epsilon-1})
\end{equation}
where
\[
\calD_d(\alpha) = \int^\infty_{-\infty}\int^\infty_{-\infty}H(x, y)S_d(x, y)\, dx\, dy.
\]
Similar to $\Xi$, we expect $G$ is a power of $\log N$. To apply \eqref{pow} we need $DG<A^{1-\eps}$, which is valid if $B>A^\eps$. By taking $G=\Xi^6$ and substituting \eqref{d0}, \eqref{dstar} and \eqref{dSd} into \eqref{D}, we arrive at
\[
\calD(\alpha)\ll AB\Xi^4\sum_{d\le \Xi^6}d^2\calD_d(\alpha)+A^2(B^2+AB\Xi^{-1})
\]
where
\[
\begin{split}
\calD_d(\alpha) &= \int^\infty_{-\infty}\int^\infty_{-\infty}H(x, y)S_d(x, y)\, dx\, dy\\
&=  \sum_{ (s, t) \equiv (0,0) \imod d}(\alpha \ast \alpha)(s, \alpha t) \int^\infty_{-\infty}\int^\infty_{-\infty}H(x, y)e(xs+yt)\, dx\, dy.
\end{split}
\]

Our final obstacle is to develop an estimate of $\calD_d(\alpha)$ for small values of $d$. Here the modulus $d$ is less than a power of $\log N$, which is analogous to the classical Siegel-Walfisz theorem. As in \eqref{integral}, after some changes of variables the above integral can be expressed as
\[
\int^\infty_{-\infty}\int^\infty_{-\infty}H(x, y)e(xw+yz)\, dx\, dy= \frac{2\pi}{A^2}\exp\bigg(-\frac{4\pi\sqrt{h(w, z)}}{A\sqrt{|\Delta|}}\bigg).
\]
Hence
\[
\calD_d(\alpha)=2\pi A^{-2} \mathop{\sum\sum}_{\substack{s_1\equiv s_2\imod d\\t_1\equiv t_2\imod {\lcm(\alpha, d)}}}\alpha(s_1,  t_1)\overline{\alpha}(s_2, t_2)\exp\bigg(-\frac{4\pi\sqrt{h(s_1, t_1; s_2, t_2)}}{A\sqrt{|\Delta|}}\bigg)
\]
where
\[
h(s_1, t_1; s_2, t_2)=h(s_1-s_2, \alpha^{-1}(t_1-t_2)).
\]
Note that
\[
\begin{split}
\calD_d(\alpha)\ll&\max_{d_1, d_2\imod {\alpha d}}\max_{N<g^*(s_0, t_0)<N'}\bigg|\sum_{\substack{(s, t)\equiv (d_1, d_2)\imod {\alpha d}\\N<g^*(s, t)<N'}}\\
&\mu(g^*(s,  t))\chi(g^*(s,t))\exp\bigg(-\frac{4\pi\sqrt{h(s, t; s_0, t_0)}}{A\sqrt{|\Delta|}}\bigg)\bigg|.
\end{split}
\]
Hence it suffices to show that
\[
\sum_{\substack{(s, t)\equiv (d_1, d_2)\imod {\alpha d}\\N<g^*(s, t)<N'}}\mu(rg^*(s, t))\chi(rg^*(s, t))\exp\bigg(-\frac{4\pi\sqrt{h(s, t; s_0, t_0)}}{A\sqrt{|\Delta|}}\bigg) \ll N\eta.
\]
Define $\eta=(\log N)^{-j}$. We can divide the region $N<g^*(s, t)<N'$ into non-overlapping sectors of the form
\[
R(Z, \xi)=\{(s, t)\in\Z^2:Z-\sqrt{N}\eta<g^*(s, t)\le Z, \xi<\arg(s+ti)\le \xi+\eta\}
\]
and there are at most $\eta^{-2}$ regions. For a fixed $(S, T)\in R(Z, \xi)$ and any $(s, t)\in R(Z, \xi)$, we always have
\[
\exp\bigg(-\frac{4\pi\sqrt{h(s, t; s_0, t_0)}}{A\sqrt{|\Delta|}}\bigg)=\exp\bigg(-\frac{4\pi\sqrt{h(S, T; s_0, t_0)}}{A\sqrt{|\Delta|}}\bigg)+O(\eta).
\]
Hence it suffices to show that
\[
\sum_{\substack{(s, t)\in R(Z, \xi)\\(s,  t)\equiv (d_1, d_2)\imod {\alpha d}\\\gcd(g^*(s, t), r)=1}}\mu(g^*(s, t))\chi(g^*(s, t)) \ll N\eta^3.
\]
This is a special case of Lemma 3.3.6 of \cite{Helf}.
\begin{lemma}
Let $Q(x, y)$ be a primitive positive definite quadratic form. Let $H\le (\log X)^N$. Then for any $h_1, h_2\imod H$, any $A>0$ and sector $S\subset\R^2$,
\[
\sum_{\substack{Q(x, y)\le X\\x\equiv h_1\imod H\\y\equiv h_2\imod H\\(x, y)\in S}}\mu(Q(x, y))\ll_{A, N} X(\log X)^{-A}.
\]
\end{lemma}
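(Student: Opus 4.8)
The plan is to translate the sum into character sums over ideals of the imaginary quadratic order attached to $Q$ and then estimate each by contour integration. First note that $\mu(Q(x,y))$ vanishes unless $Q(x,y)$ is squarefree, hence unless $\gcd(x,y)=1$, so only primitive representations contribute. Write $-D$ for the discriminant of $Q$, put $K=\Q(\sqrt{-D})$ and let $\calO\subset K$ be the order of discriminant $-D$. If $Q$ corresponds, under the form--ideal dictionary, to the ideal class $[\mathfrak a]$ (a class of invertible $\calO$-ideals, since $Q$ is primitive), then for a fixed $\Z$-basis $\alpha,\beta$ of $\mathfrak a$ the map $(x,y)\mapsto\gamma:=x\alpha+y\beta$ is a bijection from primitive pairs to primitive elements of $\mathfrak a$, and $Q(x,y)=\mathrm{N}\mathfrak c$ where $\mathfrak c:=\gamma\mathfrak a^{-1}$ is an integral ideal in the fixed class $[\mathfrak a]^{-1}$. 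Under this dictionary the sector condition $(x,y)\in S$ becomes a constraint on $\arg\gamma$ (for a fixed embedding $K\hookrightarrow\C$) and the congruences $x\equiv h_1,\ y\equiv h_2\imod H$ become $\gamma\equiv\gamma_0\imod{H\mathfrak a}$ for a fixed $\gamma_0$, i.e.\ a ray-class condition modulo an ideal of norm $\ll_Q H^2$.

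Next I would detect both conditions by Hecke characters of $K$. Approximating the indicator of the wedge-shaped image of $S$, as a function of $\arg\gamma$, above and below by trigonometric polynomials of degree $K_0:=(\log X)^{A+1}$ introduces an error bounded by the indicator of an $O(K_0^{-1})$-angular neighbourhood of the two bounding rays, which contains $O(X/K_0+\sqrt X)=O(X(\log X)^{-A})$ integer points $(x,y)$ with $Q(x,y)\le X$; this is admissible since $|\mu(Q(x,y))|\le1$. Standard manipulations --- expanding the congruence $\gamma\equiv\gamma_0\imod{H\mathfrak a}$ into the $\ll_Q H^2$ ray-class characters of that modulus, and each angular frequency $e(k\arg\gamma)$ with $|k|\le K_0$ into a Hecke Grossencharacter of frequency $k$ --- then reduce the lemma to showing that, for every Hecke character $\psi$ of $K$ of angular frequency $|k|\le K_0$ and conductor of norm $\ll_Q H^2$ (so of analytic conductor $\ll(\log X)^{O(1)}$), and for each prescribed $A'>0$,
\[
\sum_{\substack{\mathfrak c\subseteq\calO\text{ integral}\\ \mathrm{N}\mathfrak c\le X}}\mu(\mathrm{N}\mathfrak c)\,\psi(\mathfrak c)\ \ll_{A',N,Q}\ X(\log X)^{-A'}
\]
holds uniformly in $\psi$; summing over the $\ll_Q H^2 K_0$ characters $\psi$, with $A'$ chosen large, then proves the lemma.

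For the displayed estimate I would argue analytically. Since a prime ideal can divide an ideal of squarefree norm only to the first power, and an inert prime cannot divide it at all, a comparison of Euler factors gives
\[
\sum_{\mathfrak c}\frac{\mu(\mathrm{N}\mathfrak c)\,\psi(\mathfrak c)}{(\mathrm{N}\mathfrak c)^{s}}=\frac{G_\psi(s)}{L_K(s,\psi)},
\]
where $L_K(s,\psi)$ is the Hecke $L$-function of $\psi$ and $G_\psi(s)$ is an Euler product that converges absolutely, is holomorphic, and is $\ll_Q 1$ throughout $\Re s\ge3/4$, uniformly in $\psi$ (the correction factor being $1+O((\mathrm{N}\mathfrak p)^{-2s})$ at split and inert primes $\mathfrak p$, and $1$ at ramified primes). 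If $\psi$ is nontrivial then $L_K(s,\psi)$ is entire, of polynomial growth on vertical lines, with a classical zero-free region; if $\psi$ is trivial then $L_K(s,\psi)$ is $\zeta_K(s)$ times a finite Euler factor, and its simple pole at $s=1$ merely forces the Dirichlet series above to vanish there, so no main term survives. In either case Perron's formula, followed by a contour shift slightly past $\Re s=1$ using the zero-free region and the Siegel--Walfisz treatment of a possible exceptional real zero, together with writing $G_\psi$ as a Dirichlet convolution whose coefficients have summatory function $\ll_\eps y^{1/2+\eps}$, yields $\ll_{A',N,Q}X(\log X)^{-A'}$.

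The crux is the uniformity in the conductor at this last step: it rests on the Siegel--Walfisz theorem for Hecke $L$-functions of the imaginary quadratic field $K$, i.e.\ on controlling a potential Siegel zero of a real ray-class character, which is why the implied constant is ineffective. The remaining ingredients --- the precise form-to-ideal dictionary, the lattice-point count near the boundary of $S$, the handling of the extra units when $D\in\{3,4\}$, and the passage from the identity for the Dirichlet series back to the partial sum --- are routine.
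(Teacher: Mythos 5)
The paper does not actually prove this lemma; it invokes Lemma 3.3.6 of Helfgott's thesis, observing that Helfgott's argument for the Liouville function applies verbatim to $\mu$. Your write-up therefore has no in-paper counterpart to compare against, but it takes the route one would expect Helfgott's proof to take: pass to primitive pairs (automatic since $\mu$ is supported on squarefree integers), translate via the form--ideal dictionary to a sum over integral ideals of the order of discriminant $-D$, detect the sector and the congruence by Hecke Grossencharacters and finite-order characters of conductor $\ll H^{2}$, and estimate each twisted M\"obius sum via the identity $\sum_{\mathfrak c}\mu(\mathrm N\mathfrak c)\psi(\mathfrak c)(\mathrm N\mathfrak c)^{-s}=G_\psi(s)/L_K(s,\psi)$ together with Perron's formula, the classical zero-free region, and the Siegel--Walfisz treatment of a possible exceptional zero. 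The analytic core is sound, the lattice-point count near the boundary of the sector is correct, and the observation that a pole of $L_K$ at $s=1$ only produces a \emph{zero} of the Dirichlet series (so no main term) is exactly right.

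One step is stated more smoothly than it actually is. You describe $\gamma\equiv\gamma_0\imod{H\mathfrak a}$ as ``a ray-class condition,'' but ray-class characters depend only on the ideal $(\gamma)$, so they pin $\gamma$ down only up to multiplication by a unit of $\calO$, and they apply only when $(\gamma)$ is coprime to the modulus. To repair this one should (i) keep explicit track of the $w\in\{2,4,6\}$ units, which also forces the angular frequency $k$ of the Grossencharacter to be divisible by $w$ for the character to be well-defined on ideals, so the trigonometric approximation is best carried out directly in the variable $\gamma\in\mathfrak a$ rather than in $\mathfrak c$; and (ii) first split the sum according to $\gcd((\gamma),H\calO)$ to reduce each piece to the coprime case. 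Both adjustments are routine and enlarge the family of characters only by a factor $(\log X)^{O(1)}$, so the final bound is unaffected; but in a complete argument they need to be spelled out. Likewise the implied constant should be allowed to depend on $Q$ (as it implicitly does in the paper's application), which you correctly record.
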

\noindent Helfgott proved this for the Liouville function $\lam$ but the same proof also works for $\mu$. This concludes our proof of Lemma \ref{Blem}.

\bibliographystyle{amsbracket}
\providecommand{\bysame}{\leavevmode\hbox to3em{\hrulefill}\thinspace}

\end{document}